\newcommand{\clb}{\color{blue}}
\newcommand{\Ab}{\mathbf A}
\newcommand{\Ap}{\Ab_{\rm van}}
\newcommand{\Fb}{\mathbf F}
\newcommand{\R}{\mathbb R}
\newcommand{\E}{\mathrm{E}_{\rm gs}(\kappa, h_{\rm ex};B_0)}
\newcommand{\er}{\mathfrak{e}_{\rm gs}}
\DeclareMathOperator{\curl}{curl}
\newtheorem{thm}{Theorem}[section]
\newtheorem{prop}[thm]{Proposition}
\newtheorem{lem}[thm]{Lemma}
\newtheorem{theorem}[thm]{Theorem}
\newtheorem{ass}[thm]{Assumption}
\theoremstyle{remark}
\newtheorem{rem}[thm]{Remark}
\numberwithin{equation}{section}
\title[From constant to non-degenerately vanishing magnetic fields]
{From constant to non-degenerately vanishing magnetic fields in superconductivity}
\author{Bernard Helffer}
\author{Ayman Kachmar}
\address[B. Helffer]{Laboratoire de Math\'ematiques, Universit\'e de Paris-Sud 11, B\^at 425, 91405 Orsay, France and  Laboratoire Jean Leray (Universit\'e de Nantes)}
\email{bernard.helffer@math.u-psud.fr}
\address[A. Kachmar]{Department of Mathematics, Lebanese University, Hadat, Lebanon}
\email{ayman.kashmar@gmail.com}
\date{\today}
\begin{document}

\begin{abstract}
We explore the relationship between two  reference functions arising
in the analysis of the Ginzburg-Landau functional. The first
function describes the distribution of superconductivity in a
type~II superconductor subjected to a constant magnetic field. The
second function describes the distribution of superconductivity in a
type~II superconductor  submitted to a variable magnetic field that
vanishes non-degenerately along a smooth curve.
\end{abstract}

\maketitle %\tableofcontents

\section{Introduction}\label{hc2-sec:int}

The Ginzburg-Landau functional is a celebrated phenomenological
model that describes the response of a superconductor to a magnetic
field \cite{dG}. In non-dimensional units, the functional is defined
as follows,
\begin{equation}\label{eq-3D-GLf}
\mathcal E(\psi,\Ab)=\int_\Omega \left(|(\nabla-i\Ab)\psi|^2-\kappa^2|\psi|^2+\frac{\kappa^2}2|\psi|^4+|\curl\Ab-h_{\rm ex}B_0|^2\right)\,dx\,,
\end{equation}
where:
\begin{itemize}
\item $\Omega\subset\R^2$ is an open, bounded and simply connected
set with a smooth boundary\,; $\Omega$ is the cross section of a
cylindrical superconducting sample placed vertically\,;
\item $(\psi,\Ab)\in H^1(\Omega;\mathbb C)\times H^1(\Omega;\mathbb
R^2)$ describe the state of superconductivity as follows:
$|\psi|$ measures the density of the superconducting Cooper
pairs and $\curl\Ab$ measures the induced magnetic field in the
sample\,;
\item $\kappa>0$ is the Ginzburg-Landau parameter, a material characteristic
of the sample\,;
\item $h_{\rm ex}>0$ measures the intensity of the applied magnetic
field\,;
\item $B_0$ is a smooth function defined in $\overline\Omega$. The
applied magnetic field is $h_{\rm ex}B_0 \vec{e}$, where $\vec{
e}=(0,0,1)$.
\end{itemize}
We introduce the ground state energy of the functional in
\eqref{eq-3D-GLf} as follows,
\begin{equation}\label{eq-gse}
\E=\inf\{\mathcal E(\psi,\Ab)~:~(\psi,\Ab)\in H^1(\Omega;\mathbb
C)\times H^1(\Omega;\mathbb R^2)\}\,.
\end{equation}
In physical terms, \eqref{eq-gse} describes the energy of a type~II
superconductor submitted to a possibly non-constant magnetic field of intensity
$h_{\rm ex}|B_0|$.

The behavior of the ground state energy in \eqref{eq-gse} strongly
depends on the values of $\kappa$ and $h_{\rm ex}$. This is the
subject of a vast mathematical literature. In the two monographs
\cite{FH-b, SaSe}, a survey of many important results regarding the
behavior of $\E$ is given. The results are valid when $h_{\rm
ex}=h_{\rm ex}(\kappa)$ is a function of $\kappa$ and
$\kappa\to +\infty$.

Let us recall two important results regarding the ground state
energy in \eqref{eq-gse}. The first result is obtained in
\cite{SS02} and says, if $b\in(0,1]$ is a constant,  $h_{\rm
ex}=b\kappa^2$ and $B_0=1$, then
\begin{equation}\label{eq:gse-SS}
\E=g(b)|\Omega|\kappa^2+o(\kappa^2)\quad(\kappa\to +\infty)\,,
\end{equation}
where $g(b)$ is a constant that will be defined in \eqref{eq:g}
below.

The second result is given in \cite{HK} and valid under the
following assumption on the function $B_0$.
\begin{ass}\label{ass-B0}
Suppose that $B_0:\overline{\Omega}\to\R$ is a smooth function
satisfying
\begin{itemize}
\item $|B_0|+|\nabla B_0|\geq c$ in $\overline{\Omega}$, where $c>0$
is a constant\,;
\item $\Gamma=\{x\in\overline{\Omega}~:~B_0(x)=0\}$ is the union of
a finite number of smooth curves\,;
\item $\Gamma\cap\partial\Omega$ is a finite set\,.
\end{itemize}
\end{ass}
Under these assumptions on $B_0$, if $b>0$ is a constant and $h_{\rm
ex}=b\kappa^3$, then,
\begin{equation}\label{eq:gse-HK}
\E=\kappa\left(\int_{\Gamma}\Big(b|\nabla
B_0(x)|\Big)^{1/3}\,E\Big(b|\nabla
B_0(x)|\Big)\,ds(x)\right)+o(\kappa)\,,
\end{equation}
where $E(\cdot)$ is a {\it continuous} function that will be defined
in \eqref{eq:E} below, and $ds$ is the arc-length measure in
$\Gamma$.
%{\clr Bernard: Cela vaudrait sans doute le coup de rappeler quelque part que l'\'energie est nulle pour $b >...$ (li\'e au $H_{C3}$) et que $E(L)$ est nul pour $L \geq ...$.\\}

%{\clr Bernard: Il me semble qu'il faut parler des deux formules
%apparaissant en [15] (une qui fait appara\^\i tre $E$ et l'autre qui
%fait appara\^\i tre $g$ et de la question de leur
%compatibilit\'e.\\}

In physical terms, \eqref{eq:gse-HK} describes the energy of a
type~II superconductor subjected to a {\it variable} magnetic field
that {\it vanishes} along a {\it smooth curve}.   Such magnetic
fields are of special importance in the analysis of the
Ginzburg-Landau model in  surfaces (see \cite{CL}).

Magnetic fields satisfying Assumption~\ref{ass-B0} have an early
appearance in the literature, for instance in a paper by Montgomery
\cite{M}. Pan and Kwek \cite{PK} study the breakdown of
superconductivity under the Assumption~\ref{ass-B0}. They find a
constant $c_0>0$ such that, if $h_{\rm ex}= b\kappa^3$, $b> c_0$ and
$\kappa$ is sufficiently large, then $\E=0$. Recently, the results
of Pan-Kwek  have been improved in \cite{Att3, Miq}. The discussion
in \cite{HK} proves that the formula in \eqref{eq:gse-HK} is
consistent with the conclusion in \cite{PK} and with Theorem 1.7 in
\cite{Att3}.

As proven in \cite{HK}, the formula in \eqref{eq:gse-HK} continues
to hold when $h_{\rm ex}=b\kappa^3$ and $b=b(\kappa)$
satisfies\,\footnote{ The notation $a(\kappa)\ll b(\kappa)$ means
that $a(\kappa)=\delta(\kappa)b(\kappa)$ and
$\displaystyle\lim_{\kappa\to +\infty}\delta(\kappa)=0\,$.},
\begin{equation}\label{eq:cond-b}
\kappa^{-1/2}\ll b(\kappa)\ll
1\quad(\kappa\to +\infty)\,.\end{equation}
When the condition in \eqref{eq:cond-b} is violated by
allowing\,\footnote{The notation $a(\kappa)\lesssim b(\kappa)$
means that there exists a  constant $c >0$ and  $\kappa_0>0$ such
that, for all $\kappa\geq \kappa_0$, $a(\kappa)\leq c b(\kappa)\,$.}
$$  \kappa^{-1} \ll b(\kappa)\lesssim  \kappa^{-1/2} \quad(\kappa\to +\infty)\,$$ then the formula in
\eqref{eq:gse-HK} is replaced with (see \cite{HK}),
\begin{equation}\label{eq:gse-HK'}
\E=\kappa^2\int_\Omega g\big(b (\kappa) \, \kappa\, |B_0(x)|\big)\,dx+o\left(b(\kappa) ^{-1}\kappa\right)\,.
\end{equation}
 Note that \eqref{eq:gse-HK'} is still true  for lower values
of the external field but with a different expression for the
remainder term (see \cite{Att,Att2}).

The comparison of the formulas in \eqref{eq:gse-HK} and
\eqref{eq:gse-HK'} at the  border  regime\footnote{The notation
$a(\kappa)\approx  b(\kappa)$ means that $a(\kappa)\lesssim
b(\kappa)$ and $b(\kappa)\lesssim a(\kappa)$.}
$$
b(\kappa)\approx \kappa^{-1/2}$$  suggests that there might
exist a relation between the two reference functions $g(\cdot)$ and
$E(\cdot)$. This paper confirms the existence of such a
relationship.

The two  functions $g(\cdot)$ and $E(\cdot)$ are defined via
simplified versions of the functional in \eqref{eq-3D-GLf}. As we
shall see, $g(\cdot)$ will be defined via a constant magnetic field,
while, for $E(\cdot)$, this will be via a magnetic field that
vanishes along a line.

Let us recall the definition of the function $g(\cdot)$.
%{\clr which we
%may think of it as a characteristic of constant magnetic fields.}{\clr Bernard: Je ne comprend pas tr\`es bien ce que tu veux dire par l\`a.\\}
Consider $b\in\,(0,+\infty)$, $r>0\,$, and
$Q_r=\,(-r/2,r/2)\,\times\,(-r/2,r/2)$\,. Define the functional,
\begin{equation}\label{eq:rGL}
F_{b,Q_r}(u)=\int_{Q_r}\left(b|(\nabla-i\Ab_0)u|^2-|u|^2+\frac{1}2|u|^4\right)\,dx\,,
\quad \mbox{ for } u\in H^1(Q_r)\,.
\end{equation}
Here, $\Ab_0$ is the magnetic potential,
\begin{equation}\label{eq:A0}
\Ab_0(x)=\frac12(-x_2,x_1)\,,\quad \mbox{ for } x=(x_1,x_2)\in \R^2\,.
\end{equation}
Define the two Dirichlet and Neumann ground state energies,
\begin{align}
&e_D(b,r)=\inf\{F_{b,Q_r}(u)~:~u\in H^1_0(Q_r)\}\,,\label{eq:eD}\\
&e_N(b,r)=\inf\{F_{b,Q_r}(u)~:~u\in H^1(Q_r)\}\,.\label{eq:eN}
\end{align}
Thanks to \cite{Att, FK-cpde, SS02}, $g(\cdot)$ may be defined as
follows,
\begin{equation}\label{eq:g}
\forall~b>0\,,\quad g(b)=\lim_{r\to\infty}\frac{e_D(b,r)}{|Q_r|}=\lim_{r\to\infty}\frac{e_N(b,r)}{|Q_r|}\,,
\end{equation}
where  $|Q_r|$ denotes the area of $Q_r$ ($|Q_r|=r^2$).\\
 Moreover  the
function $g(\cdot)$ is a  non decreasing continuous function such
that
\begin{equation} \label{propg}
g(0)=-\frac12 \mbox{ and } g(b)=0 \mbox{ when } b\geq 1\,.
\end{equation}

Now we   introduce the function $E(\cdot)$.
%, which we might think of
%it as a characteristic of magnetic fields that vanish along a line
%(curve).

Let $L>0$, $R>0$, $\mathcal S_R=(-R/2,R/2)\times \R$ and
\begin{equation}\label{eq-A-app}
\Ap(x)=\Big(-\frac{x_2^2}2,0\Big)\,,\quad
\mbox{ for } x=(x_1,x_2)\in\R^2\,.\end{equation} Notice that $\Ap$
is a magnetic potential generating  the magnetic field
\begin{equation}\label{eq-B-app}
B_{\rm van}(x)=\curl\Ap=x_2\,,
\end{equation}
which  vanishes along the $x_2$-axis.

Consider the functional
\begin{equation}\label{eq-gs-er''}
\mathcal E_{L,R}(u)=\int_{\mathcal S_R}\left(|(\nabla-i\Ap)u|^2-L^{-2/3}|u|^2+\frac{L^{-2/3}}{2}|u|^4\right)\,dx\,,
\end{equation}
and the ground state energy
\begin{equation}\label{eq-gs-er'}
\er(L;R)=\inf\{\mathcal E_{L,R}(u)~:~u\in H^1_{{\rm mag},0}(\mathcal S_R)\}\,,
\end{equation}
where
\begin{equation}\label{eq:space-mag}
H^1_{{\rm mag},0}(\mathcal S_R)=\{u\in L^2(\mathcal S_R)~:~(\nabla-i\Ap)u\in L^2(\mathcal S_R)\quad{\rm and}\quad u=0~{\rm on~}\partial\mathcal S_R\}\,.
\end{equation}
Thanks to \cite{HK}, we may define $E(\cdot)$ as follows,
\begin{equation}\label{eq:E}
E(L)=\lim_{R\to\infty}\frac{\er(L;R)}{R}\,.
\end{equation}

In this paper, we obtain a relationship between the functions
$E(\cdot)$ and $g(\cdot)$:

\begin{thm}\label{thm:HK}
Let $g(\cdot)$ and $E(\cdot)$ be as in \eqref{eq:g} and \eqref{eq:E}
respectively. It holds,
$$E(L)=2L^{-4/3}\int_0^1g(b)\,db+o\big(L^{-4/3}\big)\quad{\rm as
~}L\to0_+\,.$$
\end{thm}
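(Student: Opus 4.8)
The plan is to connect the two reference functions by rescaling the $E(\cdot)$--problem so that, as $L\to 0_+$, the vanishing magnetic field $B_{\rm van}(x)=x_2$ looks locally constant on the relevant scale, and then to integrate the two--dimensional bulk energy density $g(\cdot)$ over the cross section. Start from the defining functional $\mathcal E_{L,R}$ on the strip $\mathcal S_R=(-R/2,R/2)\times\R$ with potential $\Ap(x)=(-x_2^2/2,0)$. The natural scaling is $x_1=L^{\alpha}y_1$, $x_2=L^{\beta}y_2$ for suitable exponents chosen so that the linear magnetic term and the potential term balance; concretely, writing out $\mathcal E_{L,R}$ after substitution shows that the change of variables $x=(x_1,x_2)\mapsto (L^{1/3}x_1,L^{1/3}x_2)$ turns $\mathcal E_{L,R}$ into a functional in which the effective coupling in front of the magnetic gradient becomes $b$--like after a further fibered (in $x_1$) reduction. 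The upshot I expect: after localizing in $x_1$ into windows of length $\asymp L^{-1/3}$, the magnetic field $x_2$ is essentially constant equal to some value $t$ on each window, and on that window the rescaled problem is, to leading order, the constant--field problem $F_{b,Q_r}$ with $b\asymp |t| L^{-?}$, whose ground state energy per unit area is $g(b)$ by \eqref{eq:g}. Summing the contributions of the windows (equivalently, integrating over the transverse variable the density $g(\cdot)$ evaluated at the local field strength) produces the claimed $\int_0^1 g(b)\,db$, with the factor $2$ coming from the two half-lines $x_2>0$ and $x_2<0$ (by the symmetry $x_2\mapsto -x_2$, which leaves $|B_{\rm van}|$ invariant) and the power $L^{-4/3}$ from the Jacobian of the rescaling combined with the $L^{-2/3}$ weights in \eqref{eq-gs-er''}.

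More precisely, I would carry out the following steps. First, the \emph{upper bound}: construct a trial state for $\er(L;R)$ by pasting together, on a grid of squares $Q_\ell(t)$ of side $\ell\asymp L^{-1/3}$ centered at heights $x_2\approx t$, minimizers of the constant--field functional $F_{b,Q_\ell}$ with the appropriate $b=b(t)$, multiplied by the gauge factor that converts $\Ap$ into (a perturbation of) the constant potential $\Ab_0$ scaled by the local field value; use a partition of unity, control the commutator/cut-off errors using $\|\nabla\chi\|_\infty\asymp L^{1/3}$, and invoke \eqref{eq:g} to replace $e_N(b,\ell)$ by $g(b)\ell^2$ up to $o(\ell^2)$. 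Dividing by $R$ and letting $R\to\infty$, then recognizing the resulting Riemann sum over $t$ as $\int g$, gives $E(L)\le 2L^{-4/3}\int_0^1 g(b)\,db+o(L^{-4/3})$. Second, the \emph{lower bound}: localize an actual (quasi-)minimizer $u$ for $\er(L;R)$ by the same grid via the IMS formula, so that $\mathcal E_{L,R}(u)\ge \sum_{\text{cells}} F^{\rm loc}(u\chi)-(\text{cutoff error})$; on each cell bound $F^{\rm loc}$ from below by $e_D(b(t),\ell)$ after gauging away the (now nearly constant) magnetic potential and absorbing the error from replacing $B_{\rm van}(x)=x_2$ by the constant $t$ — this error is $O(\ell)=O(L^{-1/3})$ relative to the linear field scale, which must be shown to be lower order after the global normalization; then use $e_D(b,\ell)\ge g(b)\ell^2-o(\ell^2)$ and the same Riemann-sum identification. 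Two technical points deserve care: (a) the range of the relevant $b$ — heuristically $b$ runs over $(0,1)$ because for $|B_{\rm van}|$ large the density $g$ vanishes by \eqref{propg}, so only a bounded window in $x_2$ (of width $\asymp L^{?}$ in original variables) contributes, which is exactly what produces the finite integral $\int_0^1$; (b) the boundary $\partial\mathcal S_R$ contributes only $O(1)$ cells and hence $o(R)$ to the energy, so the Dirichlet condition on the lateral boundary is harmless in the limit.

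The main obstacle, I expect, is the honest bookkeeping of the \textbf{two simultaneous limits} $R\to\infty$ (inside the definition of $E(L)$) and $L\to 0_+$, together with the choice of the intermediate scale $\ell=\ell(L)$ for the localization grid. One needs $\ell\to\infty$ (so that $e_{D/N}(b,\ell)/\ell^2\to g(b)$ with controlled rate) while simultaneously $\ell L^{1/3}\to 0$ (so that the magnetic field is asymptotically constant on each cell and the cut-off gradients $\asymp \ell^{-1}$ are negligible against the natural energy scale). Reconciling these with the correct powers — i.e. verifying that the error terms are genuinely $o(L^{-4/3})$ uniformly in $R$ before sending $R\to\infty$ — is the delicate part; it likely requires a quantitative version of \eqref{eq:g} (a rate of convergence $e_D(b,r)/r^2=g(b)+O(r^{-1})$, uniform for $b$ in compact subsets of $(0,1]$), which can be extracted from the subadditivity arguments of \cite{FK-cpde, SS02, Att}. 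Once the scale $\ell$ is pinned down and this uniform rate is in hand, the matching upper and lower bounds close the proof.
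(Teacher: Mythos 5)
Your strategy is, at its core, the one the paper follows: tile the strip into cells, gauge away $\Ap$ on each cell into a perturbation of the constant-field potential scaled by the local field value $|x_2|$, compare the local energy with $e_D$/$e_N$, invoke a quantitative version of \eqref{eq:g} (this is exactly \eqref{eq:g'} and Remark~\ref{rem:eN}), recognize a Riemann sum of $b\mapsto g(b)$ over the window $|x_2|L^{2/3}<1$ outside of which $g$ vanishes (Lemma~\ref{lem:eN=0}), and pick up the factor $2$ from the symmetry $x_2\mapsto -x_2$. The differences are in execution, and the paper's choices remove precisely the two difficulties you flag as the main obstacles. First, the double limit: the paper never sends $R\to\infty$ inside the argument; it works at the fixed value $R=4$ and converts $\er(L;R)/R$ into $E(L)$ using the a priori two-sided bounds \eqref{eq:lb-er}--\eqref{eq:ub-er}, whose error $C(1+L^{-2/3})R^{-2/3}$ is already $o(L^{-4/3})$, so the uniformity-in-$R$ bookkeeping disappears (your route would also close, since every cell error is extensive in $R$, but it costs more). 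Second, the intermediate scale: the paper takes cells of side $\ell\lesssim 1$ and obtains the large-square regime not from $\ell\to\infty$ but from the per-cell rescaling $y=\sqrt{|a_{j,2}|}\,(x-c_j)$, whose effective side $\sqrt{|a_{j,2}|}\,\ell$ is large because the analysis is restricted to $|x_2|\geq A$; the strip $|x_2|\leq A$ is discarded via the trivial bound $-L^{-2/3}\int|u|^2=O(RL^{-2/3})$. This makes your constraint ``$\ell\to\infty$'' unnecessary and keeps the gauge error $\eta^{-1}\ell^4\|u\|_2^2$ harmless; note also that the relevant rate in \eqref{eq:g'} is $C\sqrt{b}/r=CL^{1/3}/\ell$ per cell, which is small for fixed $\ell$. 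Two places where your write-up needs repair: (i) you have the Dirichlet/Neumann roles partly reversed --- for the lower bound the restriction of $u$ to a cell can only be compared with $e_N$ on disjoint cells (or with $e_D$ after paying IMS cutoff errors), while for the upper bound the clean construction uses Dirichlet minimizers extended by zero on disjoint cells, with no partition of unity and hence no commutator errors, rather than Neumann minimizers times cutoffs; (ii) the cells near $x_2=0$, where $\sqrt{|x_2|}\,\ell<1$ and \eqref{eq:g'} is unavailable, must be excised and estimated crudely as above, since your Riemann-sum argument does not apply there. Finally, one ingredient you omit and the paper exploits: the monotonicity of $g$ allows the free point $a_{j,2}$ to be placed at the top (resp.\ bottom) of each cell so that the Riemann sum bounds the integral from the correct side, with no modulus-of-continuity argument for $g$ needed.
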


As a consequence of Theorem~\ref{thm:HK} and the co-area formula, we
obtain:

\begin{thm}\label{prop:HK+}
Suppose that the function $B_0$ satisfies Assumption~\ref{ass-B0}
and
$$\kappa^{-1}\ll b(\kappa)\ll 1\,.$$
Let $g(\cdot)$ and $E(\cdot)$ be the energies introduced in
\eqref{eq:gse-SS} and \eqref{eq:gse-HK'} respectively. It holds,
\begin{multline*}
\int_\Omega g\big(b (\kappa) \, \kappa\, |B_0(x)|\big)\,dx
\\=\kappa^{-1}\int_{\Gamma}\Big(b(\kappa)|\nabla
B_0(x)|\Big)^{1/3}\,E\Big(b(\kappa)|\nabla B_0(x)|\Big)\,ds(x)+
o\big(b(\kappa)^{-1}\kappa^{-1}\big)\big)\,,\quad(\kappa\to
+\infty)\,.
\end{multline*}
\end{thm}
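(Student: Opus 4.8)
The plan is to treat the two sides of the claimed identity separately and to verify that each of them equals, up to an error $o\big(b(\kappa)^{-1}\kappa^{-1}\big)$, the common quantity $\dfrac{2}{b(\kappa)\,\kappa}\big(\int_0^1 g(t)\,dt\big)\,\ell_0$, where $\ell_0:=\int_\Gamma|\nabla B_0(x)|^{-1}\,ds(x)$. First I would record two facts that keep every estimate uniform. Since $\Gamma$ is compact and $B_0\equiv0$ on $\Gamma$, the first item of Assumption~\ref{ass-B0} gives $|\nabla B_0|\geq c$ on $\Gamma$; hence there are an open neighborhood $\mathcal N$ of $\Gamma$ in $\overline\Omega$ and a constant $c_1\in(0,1)$ with $c_1\leq|\nabla B_0|\leq c_1^{-1}$ on $\mathcal N$. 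Moreover, because $b(\kappa)\kappa\to+\infty$ and $g(s)=0$ for $s\geq1$ (by \eqref{propg}), for $\kappa$ large the function $x\mapsto g\big(b(\kappa)\kappa|B_0(x)|\big)$ is supported in the tube $\mathcal T_\kappa:=\{x\in\Omega:|B_0(x)|<1/(b(\kappa)\kappa)\}$, and $\mathcal T_\kappa\subset\mathcal N$; likewise, for all small $|t|$ the level set $B_0^{-1}(t)\cap\Omega$ lies in $\mathcal N$. Note also that $\int_0^1 g<0$ by \eqref{propg}, so the common quantity is of exact order $b(\kappa)^{-1}\kappa^{-1}$, making the asserted remainder genuinely lower order.

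For the right-hand side I would put $L=L(x)=b(\kappa)\,|\nabla B_0(x)|$ for $x\in\Gamma$. As $x$ runs over $\Gamma$, $L(x)$ stays in $[c_1\,b(\kappa),\,c_1^{-1}\,b(\kappa)]$, an interval shrinking to $0$; hence Theorem~\ref{thm:HK} applies uniformly and gives
\[
L(x)^{1/3}\,E\big(L(x)\big)=2\,L(x)^{-1}\int_0^1 g(t)\,dt+o\big(b(\kappa)^{-1}\big)\qquad\text{uniformly on }\Gamma,
\]
where I used $L(x)^{-1}\leq(c_1b(\kappa))^{-1}$ to turn the error $o(L^{-4/3})\cdot L^{1/3}=o(L^{-1})$ into a uniform $o(b(\kappa)^{-1})$. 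Dividing by $\kappa$, integrating over $\Gamma$ (of finite length), and using $L(x)^{-1}=\big(b(\kappa)|\nabla B_0(x)|\big)^{-1}$ then turns the right-hand side of the theorem into $\dfrac{2}{b(\kappa)\kappa}\big(\int_0^1 g\big)\ell_0+o\big(b(\kappa)^{-1}\kappa^{-1}\big)$.

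For the left-hand side I would apply the co-area formula to $B_0$ on $\mathcal N$, where $\nabla B_0$ does not vanish, writing
\[
\int_\Omega g\big(b(\kappa)\kappa|B_0(x)|\big)\,dx=\int_{\R} g\big(b(\kappa)\kappa|t|\big)\,\ell(t)\,dt,\qquad \ell(t):=\int_{B_0^{-1}(t)\cap\Omega}\frac{d\mathcal H^1}{|\nabla B_0|},
\]
which is licit because the left integrand is supported in $\mathcal T_\kappa\subset\mathcal N$. One checks that $\ell$ is bounded near $0$ and continuous at $t=0$ with $\ell(0)=\ell_0$: away from the finite set $\Gamma\cap\partial\Omega$ the curves $B_0^{-1}(t)$ depend smoothly on $t$ (implicit function theorem) and converge to $\Gamma$, while, since $|\nabla B_0|$ is bounded below there, a ball of radius $\rho$ about a point of $\Gamma\cap\partial\Omega$ contributes at most $O(\rho)$ to $\ell(t)$, uniformly in small $t$. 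Then the substitution $s=b(\kappa)\kappa\,t$, the vanishing of $g(|s|)$ for $|s|\geq1$, and dominated convergence give
\[
\int_\Omega g\big(b(\kappa)\kappa|B_0(x)|\big)\,dx=\frac{1}{b(\kappa)\kappa}\int_{-1}^{1} g(|s|)\,\ell\Big(\tfrac{s}{b(\kappa)\kappa}\Big)\,ds=\frac{2\,\ell_0}{b(\kappa)\kappa}\int_0^1 g(t)\,dt+o\big(b(\kappa)^{-1}\kappa^{-1}\big).
\]
Comparing this with the previous paragraph would conclude the proof.

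I expect no single deep step, but rather that the work lies in the uniformity bookkeeping. One must make sure the $o(L^{-4/3})$ of Theorem~\ref{thm:HK} is uniform as $L$ sweeps the collapsing interval $[c_1b(\kappa),c_1^{-1}b(\kappa)]$ — which it is, the $o(\cdot)$ being taken as $L\to0_+$ — and, more delicately, that neither the co-area reduction nor the limit $\ell(t)\to\ell_0$ is damaged near the finitely many points of $\Gamma\cap\partial\Omega$, where $B_0^{-1}(t)$ may meet $\partial\Omega$. I would handle this by a standard localization: given $\varepsilon>0$, fix $\rho>0$ so that the balls of radius $\rho$ about these points meet $\mathcal T_\kappa$ in area $\leq\varepsilon/(b(\kappa)\kappa)$ and meet every $B_0^{-1}(t)$ in length $\leq\varepsilon$; outside these balls all quantities are smooth and compactly supported, so the convergences pass; then let $\varepsilon\to0$.
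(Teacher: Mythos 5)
Your argument is correct and follows essentially the same route as the paper: reduce the left-hand side by the co-area formula for $B_0$ near $\Gamma$, use that the level-set integrals $\int_{B_0^{-1}(t)}|\nabla B_0|^{-1}\,d\mathcal H^1$ converge to $\int_\Gamma|\nabla B_0|^{-1}\,ds$ (with the finitely many points of $\Gamma\cap\partial\Omega$ handled by a small localization, as in the paper's estimate \eqref{eq:appendix}), and invoke Theorem~\ref{thm:HK} uniformly for $L=b(\kappa)|\nabla B_0(x)|$ to identify $\int_0^1 g$ with $\tfrac12 L^{4/3}E(L)+o(L^{4/3})$. The only differences are organizational (you evaluate both sides against the common quantity $\tfrac{2}{b(\kappa)\kappa}\ell_0\int_0^1 g$ rather than transforming one side into the other, and you excise $\rho$-balls around $\Gamma\cap\partial\Omega$ instead of an $\ell$-collar of $\partial\Omega$), and your bookkeeping of the two branches of $\{|B_0|=r\}$ is, if anything, more transparent than the paper's.
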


This yields the following improvement of the main result in
\cite{HK}:

\begin{theorem}\label{thm:HK+}
Suppose that Assumption~\ref{ass-B0} holds and
$$h_{\rm ex}=b(\kappa)\kappa^3\,,\quad \kappa^{-1}\ll
b(\kappa)\lesssim 1\quad(\kappa\to +\infty)\,.$$ The ground state energy in \eqref{eq:gse-HK} satisfies,
$$\E=\kappa\int_\Gamma\Big(b(\kappa)|\nabla
B_0(x)|\Big)^{1/3}\,E\Big(b(\kappa)|\nabla
B_0(x)|\Big)\,ds(x)+o\big(b(\kappa)^{-1}\kappa\big)\,.\quad(\kappa\to +\infty)\,.$$
\end{theorem}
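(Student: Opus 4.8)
The plan is to combine the three previously displayed formulas for $\E$ in the relevant regimes, using Theorem \ref{prop:HK+} as the bridge. Recall that for $\kappa^{-1/2}\ll b(\kappa)\ll 1$ the main result of \cite{HK} already gives
\[
\E=\kappa\int_\Gamma\Big(b(\kappa)|\nabla B_0(x)|\Big)^{1/3}\,E\Big(b(\kappa)|\nabla B_0(x)|\Big)\,ds(x)+o(\kappa)\,,
\]
and since in this range $b(\kappa)^{-1}\kappa\gtrsim\kappa$, the asserted remainder $o\big(b(\kappa)^{-1}\kappa\big)$ is weaker than $o(\kappa)$; hence the statement is immediate there. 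It therefore remains to treat the complementary range $\kappa^{-1}\ll b(\kappa)\lesssim \kappa^{-1/2}$. In that range \eqref{eq:gse-HK'} from \cite{HK} gives
\[
\E=\kappa^2\int_\Omega g\big(b(\kappa)\,\kappa\,|B_0(x)|\big)\,dx+o\big(b(\kappa)^{-1}\kappa\big)\,,
\]
so the task reduces to replacing the bulk integral $\kappa^2\int_\Omega g(\cdots)\,dx$ by the boundary-type integral $\kappa\int_\Gamma(\cdots)E(\cdots)\,ds$ up to an error $o\big(b(\kappa)^{-1}\kappa\big)$.

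The second step is to invoke Theorem \ref{prop:HK+}, which is precisely that replacement statement. Under Assumption \ref{ass-B0} and with $\kappa^{-1}\ll b(\kappa)\ll 1$ it asserts
\[
\int_\Omega g\big(b(\kappa)\,\kappa\,|B_0(x)|\big)\,dx=\kappa^{-1}\int_\Gamma\Big(b(\kappa)|\nabla B_0(x)|\Big)^{1/3}E\Big(b(\kappa)|\nabla B_0(x)|\Big)\,ds(x)+o\big(b(\kappa)^{-1}\kappa^{-1}\big)\,.
\]
Multiplying through by $\kappa^2$ converts this into
\[
\kappa^2\int_\Omega g\big(b(\kappa)\,\kappa\,|B_0(x)|\big)\,dx=\kappa\int_\Gamma\Big(b(\kappa)|\nabla B_0(x)|\Big)^{1/3}E\Big(b(\kappa)|\nabla B_0(x)|\Big)\,ds(x)+o\big(b(\kappa)^{-1}\kappa\big)\,.
\]
Substituting this into the expansion \eqref{eq:gse-HK'} and absorbing the two error terms $o\big(b(\kappa)^{-1}\kappa\big)$ into a single one yields exactly the claimed asymptotics on the range $\kappa^{-1}\ll b(\kappa)\lesssim\kappa^{-1/2}$. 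Finally, the two ranges $\kappa^{-1}\ll b(\kappa)\lesssim\kappa^{-1/2}$ and $\kappa^{-1/2}\ll b(\kappa)\lesssim 1$ together cover $\kappa^{-1}\ll b(\kappa)\lesssim 1$ (with overlap near $b\approx\kappa^{-1/2}$, where both derivations are valid and consistent), so the formula holds throughout.

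The only genuine content beyond bookkeeping is checking that the orders of the error terms match up. The subtle point is that at the lower end $b(\kappa)\approx\kappa^{-1}$ the prefactor $b(\kappa)^{-1}\kappa$ can be as large as $\kappa^2$, so one must make sure the error from \eqref{eq:gse-HK'} and the error transported through Theorem \ref{prop:HK+} (after multiplication by $\kappa^2$) are both genuinely $o\big(b(\kappa)^{-1}\kappa\big)$ and not merely $O\big(b(\kappa)^{-1}\kappa\big)$; this is exactly what the cited statements provide, so no new estimate is needed. The main (very mild) obstacle is thus purely organizational: verifying that the union of the two regimes is seamless and that the weaker remainder $o\big(b(\kappa)^{-1}\kappa\big)$ claimed here is indeed implied by the sharper $o(\kappa)$ remainder available in the upper regime. $\qed$
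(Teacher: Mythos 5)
Your proposal is correct and is essentially the paper's own argument: the paper presents Theorem~\ref{thm:HK+} as an immediate consequence of Theorem~\ref{prop:HK+} combined with \eqref{eq:gse-HK} and \eqref{eq:gse-HK'} from \cite{HK}, which is exactly the case split and bookkeeping you carry out (and your check that $o(\kappa)$ implies $o(b(\kappa)^{-1}\kappa)$ when $b(\kappa)\lesssim 1$ is the right observation). The only point the paper also leaves implicit, which you might note, is that a general $b(\kappa)$ with $\kappa^{-1}\ll b(\kappa)\lesssim 1$ need not lie in one of your two regimes for all large $\kappa$, so strictly one should argue along subsequences; this is routine.
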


The rest of the paper is devoted to the proof of
Theorems~\ref{thm:HK} and \ref{prop:HK+}.  Note that, along the
proof of Theorem~\ref{thm:HK}, we provide explicit estimates of the
remainder terms (see Theorems~\ref{thm:lb} and \ref{thm:ub}).

\section{Preliminaries}

In this section, we collect useful results regarding the two
functionals in \eqref{eq:gse-SS} and \eqref{eq:gse-HK}.

For the functional in \eqref{eq:gse-SS} and the corresponding ground
state energies in \eqref{eq:eD} and \eqref{eq:eN}, the following
results are given in \cite{Att2,FK-cpde}:

\begin{prop}\label{prop:FK}~
\begin{enumerate}
\item There exist  minimizers of the ground state energies in
\eqref{eq:eD} and \eqref{eq:eN}.
\item For all $r>0$ and $b>0$, a minimizer $u_{b,r}$ of
\eqref{eq:eD} or \eqref{eq:eN} satisfies
$$|u_{b,r}|\leq 1\quad{~\rm in~}Q_r\,.$$
\item For all $r>0$ and $b>0$, $e_D(b,R)\geq e_N(b,R)$.
\item For all $r>0$ and $b\geq 1$, $e_D(b,r)=0$.
\item  There exists a constant $C>0$ such that, for all
$ b>0$ and $ r\geq 1$, then
\begin{equation}\label{eq:2.1}
e_N(b,R)\geq e_D(b,r)-Cr\sqrt{b}\,.
\end{equation}
%{\clr Ayman: La demontration est exactement comme fait dans
%l'article de Attar \cite{Att2}. Faut-il ajouter des details?\\ Bernard: Je ne comprend pas bien ta remarque. La  proposition 2.2 de [2] semble pouvoir \^etre presque utilis\'ee directement.
%Sauf qu'il y a une condition $b<1$ qu'on doit pouvoir faire sauter exactement comme tu le fais. Par contre le reste est $- C r \sqrt{b}$.
%Il me semble qu'il vaut mieux mettre $r \geq 1$.\\}
\item There exists a  constant $C$ such that, for all
$r\geq 1$ and  $b\in(0,1)$,
\begin{equation}\label{eq:g'}
  g(b)\leq\frac{e_D(b,r)}{|Q_r|}\leq g(b)+C\frac{\sqrt{b}}{r}\,.
\end{equation}
%{\clr Bernard: Pourquoi ne pas mettre $r\geq 1$ ?}
\end{enumerate}
\end{prop}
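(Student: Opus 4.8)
The plan is to establish the six assertions in turn, the first four by soft arguments and the last two by the comparison/tiling machinery that is the technical heart.

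\textbf{Assertions (1)--(4).} For (1) I would use the direct method: since $-|u|^2+\tfrac12|u|^4\ge-\tfrac12$, the functional $F_{b,Q_r}$ is bounded below, and a minimizing sequence is bounded in the magnetic Sobolev space (the kinetic term controls $\int|(\nabla-i\Ab_0)u|^2$, and $\Ab_0$ is bounded on $Q_r$, so $\nabla u_n$ is bounded in $L^2$); Rellich compactness in dimension two gives strong convergence in $L^2$ and $L^4$, while the magnetic Dirichlet term is weakly lower semicontinuous, so the weak limit is a minimizer (lying in $H^1_0$ in the Dirichlet case, which is weakly closed). For (2) I would run a competitor argument: replacing a minimizer $u$ by $\tilde u=u/\max(1,|u|)$ does not increase $\int|(\nabla-i\Ab_0)u|^2$ (the polar-decomposition identity $|(\nabla-i\Ab_0)u|^2=|\nabla|u||^2+|u|^2|\nabla\varphi-\Ab_0|^2$ shows truncation of the modulus lowers both summands) and strictly lowers $\int(-|u|^2+\tfrac12|u|^4)$ on $\{|u|>1\}$ because $t\mapsto-t+\tfrac12t^2$ increases for $t>1$; minimality then forces $|u|\le1$. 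Assertion (3) is immediate from $H^1_0(Q_r)\subset H^1(Q_r)$. For (4) I would use that $\curl\Ab_0=1$ together with the factorization identity yielding $\int_{Q_r}|(\nabla-i\Ab_0)u|^2\ge\int_{Q_r}|u|^2$ for $u\in H^1_0(Q_r)$; hence for $b\ge1$ one has $F_{b,Q_r}(u)\ge\tfrac12\int|u|^4\ge0$, and since $F_{b,Q_r}(0)=0$ this gives $e_D(b,r)=0$.

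\textbf{Assertion (5): the Dirichlet--Neumann comparison.} This is the main obstacle. Let $u$ be a Neumann minimizer, so $|u|\le1$ by (2) and, from $F_{b,Q_r}(u)\le F_{b,Q_r}(0)=0$, one gets the a priori bound $\int_{Q_r}|(\nabla-i\Ab_0)u|^2\le b^{-1}r^2$. I would build a Dirichlet competitor $\chi u$, where $\chi$ equals $1$ away from $\partial Q_r$, vanishes on $\partial Q_r$, has transition layer of width $\delta$ and $|\nabla\chi|\le C/\delta$. Expanding $F_{b,Q_r}(\chi u)$, the delicate point is the kinetic cross term: a naive Cauchy--Schwarz bound using the global gradient bound produces an error of order $b^{1/4}r^{3/2}\delta^{-1/2}$, which is too large. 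Instead I would use the exact pointwise identity $2\chi\,\mathrm{Re}\big[u\,\nabla\chi\cdot\overline{(\nabla-i\Ab_0)u}\big]=\tfrac12\nabla(\chi^2)\cdot\nabla|u|^2$, integrate by parts (the boundary term vanishes since $\chi=0$ on $\partial Q_r$) to rewrite it as $-\tfrac12\int\Delta(\chi^2)|u|^2$, and bound it by $Cr/\delta$ using $|u|\le1$ and $|\Delta(\chi^2)|\le C/\delta^2$ on the layer of area $\le Cr\delta$. The remaining errors (the term $\int b|\nabla\chi|^2|u|^2$ of order $br/\delta$, and the localization defect of the potential of order $r\delta$) are handled similarly, so $e_D(b,r)\le e_N(b,r)+C(r\delta+br/\delta)$; optimizing at $\delta\sim\sqrt b$ gives the claimed $Cr\sqrt b$ whenever $r\gtrsim\sqrt b$. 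In the complementary range $r\lesssim\sqrt b$ I would instead invoke the trivial bounds $e_N(b,r)\ge-\tfrac12|Q_r|$ and $e_D(b,r)\le0$, which already give $e_N(b,r)\ge e_D(b,r)-Cr\sqrt b$ once $C$ is large enough; gluing the two ranges proves \eqref{eq:2.1}.

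\textbf{Assertion (6): the rate for $e_D(b,r)/|Q_r|$.} The key preliminary is that translating $Q_r$ changes $\Ab_0$ only by a constant vector, i.e. by a gauge $\nabla(\text{linear})$, so $e_D$ and $e_N$ are invariant under translation of the square. For the lower bound $g(b)\le e_D(b,r)/|Q_r|$ I would use subadditivity of $e_D$: tile $Q_R$ by $N=\lfloor R/r\rfloor^2$ disjoint gauge-translated copies of $Q_r$, place a Dirichlet minimizer on each and extend by zero, obtaining an $H^1_0(Q_R)$ competitor of energy $N\,e_D(b,r)$; since $e_D(b,r)\le0$ and $N/R^2\to r^{-2}$, letting $R\to\infty$ in $e_D(b,R)/R^2\le (N/R^2)\,e_D(b,r)$ and using \eqref{eq:g} gives $g(b)\le e_D(b,r)/|Q_r|$. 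For the upper bound I would use superadditivity of $e_N$: restricting a Neumann minimizer on $Q_R$ (with $R=kr$) to the $N$ subsquares yields admissible Neumann competitors, so $e_N(b,R)\ge N\,e_N(b,r)$, whence $e_N(b,r)/|Q_r|\le e_N(b,R)/R^2\to g(b)$. Combining this with assertion (5) in the form $e_D(b,r)\le e_N(b,r)+Cr\sqrt b$ yields $e_D(b,r)/|Q_r|\le g(b)+C\sqrt b/r$, completing \eqref{eq:g'}.
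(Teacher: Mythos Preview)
Your arguments are sound and constitute a correct, self-contained proof. The paper itself does not prove Proposition~\ref{prop:FK}: it simply quotes the statements from \cite{Att2,FK-cpde}, adding only Remark~\ref{rem:proof(5)} to extend assertion~(5) from the range $0<b<1$ treated in \cite{Att2} to $b\ge1$. So there is no detailed ``paper's proof'' to match, and what you wrote is essentially the standard proof one finds in those references (cutoff comparison for Dirichlet vs.\ Neumann, translation/gauge invariance plus sub-/super-additivity for the thermodynamic limit rate).

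Two minor comments. First, in your treatment of the cross term in~(5) you bound $-\tfrac12\int\Delta(\chi^2)|u|^2$ by $Cr/\delta$; remember this sits inside the kinetic energy and therefore carries a factor $b$, so its contribution to $F_{b,Q_r}(\chi u)-F_{b,Q_r}(u)$ is $Cbr/\delta$, matching the $|\nabla\chi|^2|u|^2$ term. Your final balance $C(r\delta+br/\delta)$ and the choice $\delta\sim\sqrt b$ are correct regardless, so this is only a bookkeeping slip. Second, your extension of~(5) to large $b$ via the trivial bound on $\{r\lesssim\sqrt b\}$ differs from the paper's Remark~\ref{rem:proof(5)}, which instead uses the monotonicity $e_N(b,r)\ge e_N(1,r)$ for $b\ge1$ to reduce to the case $b=1$; both routes work equally well.
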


\begin{rem}\label{rem:ext(6)}
The estimate in \eqref{eq:g'} continues to hold when $b\geq
1$, since in this case $g(b)=0$ and $e_D(b,r)=0$.
\end{rem}

\begin{rem}\label{rem:proof(5)}
Let us mention that Inequality \eqref{eq:2.1} is proved in
\cite[Prop.~2.2]{Att2} for $0<b<1$ and can be easily extended for
$b=1$. For $b\geq 1$, we have, $e_D(b,R)=0$, and by a simple
comparison argument,
$$e_N(b,r)\geq e_N(1,r)\geq e_D(1,r)-Cr=e_D(b,r)-Cr\geq e_D(b,r)-Cr\sqrt{b}\,.$$
\end{rem}

\begin{rem}\label{rem:eN}
  We  recall the following simple consequence of  the
assertions (3)-(6) in Proposition~\ref{prop:FK}. Knowing that
$g(b)=0$ for all $b\geq 1$, we may find  a constant $C>0$ such that,
for all $b>0$ and $r\geq 1$,
$$\frac{e_N(b,r)}{|Q_r|}\geq g(b)-C\frac{\sqrt{b}}r\,.$$
\end{rem}

The next lemma indicates a regime where the Neumann energy in
\eqref{eq:eN} vanishes.

\begin{lem}\label{lem:eN=0}
There exists a constant $r_0>0$ such that, for all $r\geq r_0$ and
$b\geq r_0$,
$$e_N(b,r)=0\,.$$
\end{lem}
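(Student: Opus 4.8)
The plan is to exhibit, for $b$ and $r$ large, a competitor $u\in H^1(Q_r)$ with $F_{b,Q_r}(u)<0$; since the trivial state $u\equiv 0$ gives $F_{b,Q_r}(0)=0$, this already forces $e_N(b,r)<0$, and then by assertion (3) of Proposition~\ref{prop:FK} together with assertion (4) (which gives $e_D(b,r)=0$ for $b\ge 1$) we would actually want the opposite conclusion $e_N(b,r)=0$. So the correct strategy is the reverse: show that \emph{no} competitor can do better than $0$, i.e. that $F_{b,Q_r}(u)\ge 0$ for every $u\in H^1(Q_r)$ once $b$ and $r$ are large enough. The natural tool is the lower bound on the magnetic Neumann Laplacian: on $Q_r$ one has the well-known inequality $\int_{Q_r}|(\nabla-i\Ab_0)u|^2\,dx\ge \Theta_0\,\|u\|_{L^2(Q_r)}^2$ for $r$ large, where $\Theta_0\in(0,1)$ is the de~Gennes constant (more crudely, any fixed positive spectral lower bound $\lambda(r)\to\Theta_0$ works, and for our purposes even a bound bounded below by a positive constant for $r\ge r_0$ suffices).

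Granting such a bound $\int_{Q_r}|(\nabla-i\Ab_0)u|^2\ge c_0\|u\|_2^2$ with $c_0>0$ for $r\ge r_0$, I would estimate
\begin{equation*}
F_{b,Q_r}(u)\ge b c_0\|u\|_{L^2(Q_r)}^2-\|u\|_{L^2(Q_r)}^2+\frac12\|u\|_{L^4(Q_r)}^4\ge (bc_0-1)\|u\|_{L^2(Q_r)}^2,
\end{equation*}
which is $\ge 0$ as soon as $b\ge 1/c_0$. Choosing $r_0$ large enough that the spectral bound holds with some fixed $c_0>0$ for all $r\ge r_0$, and then enlarging $r_0$ if necessary so that $r_0\ge 1/c_0$, we get $F_{b,Q_r}(u)\ge 0$ for all $u\in H^1(Q_r)$ whenever $r\ge r_0$ and $b\ge r_0$. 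Combined with $F_{b,Q_r}(0)=0$ this yields $e_N(b,r)=0$.

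The only genuine point to pin down is the positive lower bound for the magnetic Neumann quadratic form on the square $Q_r$, uniformly for $r\ge r_0$. This follows from standard semiclassical magnetic bottle estimates: the bottom of the spectrum of $(\nabla-i\Ab_0)^2$ with Neumann boundary conditions on $Q_r$ (a domain of fixed unit magnetic field strength and growing size) converges to $\Theta_0>0$ as $r\to\infty$, and in any case stays bounded away from $0$ for $r$ bounded below, because $\Ab_0$ generates a unit constant field so there is no field-vanishing locus to destroy the lower bound. One may quote this from \cite{FH-b} or argue directly; I expect this spectral input to be the main (and essentially only) obstacle, everything else being the elementary three-term estimate above together with the monotonicity of $b\mapsto e_N(b,r)$ already recorded in Remark~\ref{rem:proof(5)}.
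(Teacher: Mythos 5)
Your proposal is correct and follows essentially the same route as the paper: the trivial competitor $u\equiv 0$ gives $e_N(b,r)\le 0$, and a uniform positive lower bound for the magnetic Neumann quadratic form on $Q_r$ (obtained in the paper by rescaling to $Q_1$ with semiclassical parameter $h=r^{-2}$ and using $\mu_1(h)/h\to\Theta_1>0$) yields $F_{b,Q_r}(u)\ge (bc_0-1)\|u\|_{L^2}^2\ge 0$ once $b$ is large. The only cosmetic imprecision is attributing the limiting constant to the half-plane de~Gennes constant $\Theta_0$, whereas for the square (a domain with corners) the relevant limit is the corner constant the paper denotes $\Theta_1$; since all that is needed is some positive constant, this does not affect the argument.
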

\begin{proof}
We have the trivial upper bound, valid for all $b>0$ and $r>0$,
$$e_N(b,r)\leq F_{b,Q_r}(0)=0\,.$$
Now we will prove that $e_N(b,r)\geq 0$ for sufficiently large
values of $b$ and $r$. Let $u$ be an arbitrary function in
$H^1(Q_r)$.\\
 We apply a rescaling to obtain,
\begin{equation}\label{eq:eigenvalue}
\int_{Q_r}|(\nabla-i\Ab_0)u|^2\,dx=r^4\int_{Q_1}|(r^{-2}\nabla-i\Ab_0)v|^2\,dy\,,
\end{equation}
where $$v(y)=u(ry)\,.$$
 For every $h>0$, we  introduce the following
ground state eigenvalue,
$$ \mu_1(h)=\inf_{\substack{v\in
H^1(Q_1)\\v\not=0}}\frac{\displaystyle\int_{Q_1}|(h\nabla-i\Ab_0)v|^2\,dy}{\displaystyle\int_{Q_1}|v|^2\,dy}\,.$$
It is a known fact that (see  \cite{Bon, Pan, FH-b}),
$$\lim_{h\to0_+}\frac{\mu_1(h)}{h}=\Theta_1\,,$$
where $\Theta_1\in(0,1)$ is a universal constant.

In that way, we get a constant $r_1>0$ such that, for all $r\geq
r_1$, we infer from \eqref{eq:eigenvalue},
$$
 \int_{Q_r}|(\nabla-i\Ab_0)u|^2\,dx\geq\frac{\Theta_1}2\int_{Q_1}|v (y)|^2\,r^2dy=\frac{\Theta_1}2\int_{Q_r}|u (x)|^2\,dx \,.
$$
 We insert this into the expression of  $F_{b,Q_r}(u)$ to get, for
all $r\geq r_1$ and $b>0$,
$$F_{b,Q_{r}}\geq \int_{Q_{r}}
\left(b\frac{\Theta_1}2-1\right)|u|^2\,dx\,.$$ Let
$r_0=\max(r_1,2\Theta_1^{-1})$. Clearly, for all $r\geq r_0$, $b\geq
r_0$ and $u\in H^1(Q_r)$, $F_{b,Q_r}(u)\geq 0$. Consequently,
$e_N(b,r)\geq 0$.
\end{proof}

The functional in \eqref{eq:gse-HK} is studied in \cite{HK}. In
particular, the following results were obtained:

\begin{prop}\label{prop:HK}~
\begin{enumerate}
\item For all $L>0$ and $R>0$, there exists a minimizer
$\varphi_{L,R}$ of \eqref{eq-gs-er'}.
\item The function $\varphi_{L,R}$ satisfies
$$|\varphi_{L,R}|\leq 1\quad{\rm in~}\mathcal S_R\,.$$
\item There exists a constant $C>0$ such that, for all $L>0$ and
$R>0$,
\begin{equation}\label{eq:ub-u}
\int_{\mathcal S_R}|\varphi_{L,R}(x)|^2\,dx\leq CL^{-2/3}R\,.
\end{equation}
\item For all $ L>0$ and $R>0$,
\begin{equation}\label{eq:lb-er}
E(L)\leq \frac{\er(L;R)}{R} \,.
\end{equation}
%{\clr V\'erifier la compatibilit\'e entre (1.16), les r\'esultats de [15] et cette in\'egalit\'e. De temps en temps, il y a $R$ et \`a d'autres endroits il y a $2R$.\\}
\item There exists a constant $C>0$ such that, for all $L>0$ and $\ R\geq 4$,
\begin{equation}\label{eq:ub-er}
 \frac{\er(L;R)}{R}\leq E(L)+C\left(1+L^{-2/3}\right)R^{-2/3}\,.
\end{equation}
\end{enumerate}
\end{prop}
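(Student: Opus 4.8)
The plan is to derive all five assertions from one coercivity estimate: there are constants $c_0>0$ and $C_0\geq0$, independent of $R$ and $L$, such that
\begin{equation}\label{eq:conf-plan}
\int_{\mathcal S_R}|(\nabla-i\Ap)u|^2\,dx\;\geq\;\int_{\mathcal S_R}\bigl(c_0|x_2|-C_0\bigr)|u|^2\,dx\qquad\bigl(u\in H^1_{{\rm mag},0}(\mathcal S_R)\bigr).
\end{equation}
I would prove \eqref{eq:conf-plan} by an IMS localization in $x_2$: writing $u=\sum_{k\in\mathbb Z}\chi_k(x_2)u$ for a smooth unit partition $\{\chi_k\}$ of $\mathbb R$ subordinate to intervals of length one, one has $\int|(\nabla-i\Ap)u|^2=\sum_k\bigl(\int|(\nabla-i\Ap)(\chi_k u)|^2-\int|\chi_k'|^2|u|^2\bigr)$; each $\chi_k u$ extends by $0$ across the lateral Dirichlet boundary to an $H^1(\mathbb R^2)$ function, to which the lowest--Landau--level inequality $\int_{\mathbb R^2}|(\nabla-iA)v|^2\geq\int_{\mathbb R^2}|\curl A|\,|v|^2$ applies with $\curl\Ap=x_2$; since $|x_2|\geq|k|-1$ on $\supp\chi_k$ and $\|\chi_k'\|_\infty$ is bounded uniformly in $k$, summation yields \eqref{eq:conf-plan}. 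As a byproduct the Dirichlet realization of $-(\nabla-i\Ap)^2$ on $\mathcal S_R$ is bounded below by a fixed $\nu_\ast>0$, uniformly in $R$.

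\textbf{Assertions (1)--(3).} For (1) I would run the direct method. Completing the square in $-L^{-2/3}|u|^2+\tfrac12 L^{-2/3}|u|^4$ and invoking \eqref{eq:conf-plan} shows $\er(L;R)>-\infty$. Given a minimizing sequence $(u_n)$, replacing $u_n$ by $\min(1,1/|u_n|)\,u_n$ does not increase $\mathcal E_{L,R}$ --- by the diamagnetic inequality for the kinetic term and the fact that $s\mapsto-s+\tfrac12 s^2$ is non--increasing on $[0,1]$ with minimum at $s=1$ --- so we may assume $|u_n|\leq1$; then \eqref{eq:conf-plan} together with $\mathcal E_{L,R}(u_n)\leq1$ bounds $\int|x_2|\,|u_n|^2$ and $\int|(\nabla-i\Ap)u_n|^2$, so $(u_n)$ is bounded in $H^1_{{\rm mag},0}(\mathcal S_R)$. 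The strip is bounded in $x_1$ and \eqref{eq:conf-plan} forces $x_2$--tightness, so the embedding $H^1_{{\rm mag},0}(\mathcal S_R)\hookrightarrow L^2(\mathcal S_R)$ is compact; passing to a subsequence, $u_n\to\varphi_{L,R}$ strongly in $L^2$ (hence, being uniformly bounded, in $L^4$), while $u\mapsto\int|(\nabla-i\Ap)u|^2$ is weakly lower semicontinuous. The limit is a minimizer and inherits $|\varphi_{L,R}|\leq1$, which is (2). For (3): if $L^{-2/3}<\nu_\ast$ then $\mathcal E_{L,R}>0$ on $H^1_{{\rm mag},0}(\mathcal S_R)\setminus\{0\}$, hence $\varphi_{L,R}\equiv0$ and the claim is trivial; otherwise $L^{-2/3}\geq\nu_\ast$, and from $\mathcal E_{L,R}(\varphi_{L,R})\leq0$ and $|\varphi_{L,R}|\leq1$ we get $\int|(\nabla-i\Ap)\varphi_{L,R}|^2\leq L^{-2/3}\int|\varphi_{L,R}|^2$, which with \eqref{eq:conf-plan} gives $c_0\int|x_2|\,|\varphi_{L,R}|^2\leq(L^{-2/3}+C_0)\int|\varphi_{L,R}|^2$; choosing $M=2(L^{-2/3}+C_0)/c_0$, the part of $\int|\varphi_{L,R}|^2$ over $\{|x_2|>M\}$ is at most $\tfrac12\int|\varphi_{L,R}|^2$, while the part over $\{|x_2|\leq M\}$ is at most $|\{|x_2|\leq M\}\cap\mathcal S_R|=2MR$ since $|\varphi_{L,R}|\leq1$; hence $\int_{\mathcal S_R}|\varphi_{L,R}|^2\leq4MR\leq CL^{-2/3}R$ in this regime, which is \eqref{eq:ub-u}.

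\textbf{Assertions (4)--(5).} The key structural fact is that $\Ap$ depends only on $x_2$, so $\mathcal E_{L,R}$ is invariant under $x_1$--translations, while functions of $H^1_{{\rm mag},0}$ vanish on $\{x_1=\pm R/2\}$. Hence, given minimizers on $\mathcal S_{R_1}$ and $\mathcal S_{R_2}$, translated copies can be placed in adjacent $x_1$--windows of widths $R_1$ and $R_2$ inside $\mathcal S_{R_1+R_2}$ and glued across the common interface (both vanish there), producing an admissible competitor; therefore
\begin{equation}\label{eq:subadd-plan}
\er(L;R_1+R_2)\leq\er(L;R_1)+\er(L;R_2).
\end{equation}
Since \eqref{eq:conf-plan} (and completing the square) gives $\er(L;R)\geq-C_L R$, Fekete's subadditivity lemma yields $E(L)=\lim_{R\to\infty}\er(L;R)/R=\inf_{R>0}\er(L;R)/R$, which is precisely \eqref{eq:lb-er}. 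For \eqref{eq:ub-er} I would argue in the opposite direction: fix a large $S$, take the minimizer $\varphi_{L,S}$ (so $|\varphi_{L,S}|\leq1$ and, by \eqref{eq:ub-u}, $\int_{\mathcal S_S}|\varphi_{L,S}|^2\leq CL^{-2/3}S$), restrict it to an $x_1$--window of length $R-\ell$, multiply by a cutoff of width $\ell$ to recover the lateral Dirichlet condition, and average the position of the window over $\mathcal S_S$: the bulk part is bounded by $\tfrac{R-\ell}{S}\,\er(L;S)$ and the cutoff and interface contributions by $C\ell^{-1}(1+L^{-2/3})$ plus terms controlled through \eqref{eq:ub-u} and the kinetic bound on $\varphi_{L,S}$; dividing by $R$, optimizing $\ell$ and letting $S\to\infty$ produces $\er(L;R)/R\leq E(L)+C(1+L^{-2/3})R^{-2/3}$ for $R\geq4$, which is \eqref{eq:ub-er}. (The precise accounting of the cutoff and interface terms so that the remainder comes out exactly in the form of \eqref{eq:ub-er} is the somewhat delicate book-keeping of \cite{HK}.)

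\textbf{Main obstacle.} The heart of the matter is \eqref{eq:conf-plan} with constants \emph{uniform in $R$ and $L$}: away from $\{x_2=0\}$ the linear confinement is the Landau bound used above, but near $\{x_2=0\}$ the field $B_{\rm van}$ vanishes and one can only rely on the uniform spectral gap $\nu_\ast$ of the Montgomery--type model, which must be glued to the strong--field estimate; everything else --- the direct method for (1)--(2), the a priori bound (3), and the subadditivity/test--function scheme for (4)--(5) --- then follows by standard Ginzburg--Landau arguments, the remaining subtlety being the exact shape of the remainder in \eqref{eq:ub-er}.
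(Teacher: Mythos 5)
The paper itself gives no proof of this proposition: it is imported wholesale from \cite{HK} (``the following results were obtained''), so there is no internal argument to compare yours against. Judged on its own, your treatment of items (1)--(4) is sound and follows the route one would expect: IMS localization in $x_2$ plus the Landau-level bound to get the confinement estimate, the direct method with truncation to $|u|\le 1$, the a priori $L^2$ bound from $\mathcal E_{L,R}(\varphi_{L,R})\le 0$ combined with confinement, and $x_1$-translation invariance giving subadditivity of $R\mapsto\er(L;R)$, whence $E(L)=\inf_{R>0}\er(L;R)/R$ by Fekete, which is exactly \eqref{eq:lb-er}. One phrasing must be corrected: the inequality $\int|(\nabla-iA)v|^2\ge\int|\curl A|\,|v|^2$ is false in general for sign-changing fields (what holds is $\int|(\nabla-iA)v|^2\ge\pm\int(\curl A)|v|^2$). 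In your localized application this is harmless, since $x_2$ has constant sign on each unit cell away from $\{x_2=0\}$ and the bound you need on the cells meeting the zero line is trivial, but as stated the lemma you invoke is wrong.

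The genuine gap is item (5), which is precisely the estimate the paper consumes (it is invoked with $R=4$ in the proof of Theorem~\ref{thm:lb}). Your averaging/cutoff scheme yields, after letting $S\to\infty$, a bound of the form $\er(L;R)/R\le(1-\ell/R)E(L)+C\ell^{-1}(1+L^{-2/3})R^{-1}$. Since $E(L)\le 0$ and the only a priori control available without circularity is $|E(L)|\le CL^{-4/3}$ (the negative part of the energy density lives in $\{|x_2|\lesssim L^{-2/3}\}$ and is bounded below by $-\tfrac12 L^{-2/3}$ there), the two error terms are $\ell L^{-4/3}R^{-1}$ and $\ell^{-1}(1+L^{-2/3})R^{-1}$; optimizing gives $\ell\sim L^{1/3}$ and a remainder of order $L^{-1}R^{-1}$. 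Structurally, your construction can only ever produce a remainder of the form $h(L)R^{-1}$ with $\ell$ chosen independently of $R$, so the exponent $R^{-2/3}$ in \eqref{eq:ub-er} cannot come out of it; and if one lets $\ell$ depend on $R$ to manufacture that exponent, the resulting bound matches $C(1+L^{-2/3})R^{-2/3}$ only when $R\gtrsim L^{-1}$, which excludes the regime $R=4$, $L\to 0_+$ actually used in the paper. You acknowledge this by deferring the bookkeeping to \cite{HK}, but that means \eqref{eq:ub-er} is not established. (A remainder $CL^{-1}R^{-1}$ would in fact still suffice for the paper's application, since the final error in Theorem~\ref{thm:lb} is anyway $CL^{-1}$; but then you are proving a weaker statement than (5) and should say so explicitly rather than claim the stated form.)
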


\section{Proof of Theorem~\ref{thm:HK}: Lower bound}

The aim of this section is to prove the lower bound in Theorem
\ref{thm:HK}.  Note that  the lower bound below is with a
better remainder term.

\begin{thm}\label{thm:lb}
There exist two constants $L_0>0$ and $C>0$ such that, for all
$L\in(0,L_0)$,
$$E(L)\geq 2L^{-4/3}\int_0^1g(b)\,db-CL^{-1}\,,$$
where $E(\cdot)$ and $g(\cdot)$ are the energies introduced in
\eqref{eq:E} and \eqref{eq:g} respectively.
\end{thm}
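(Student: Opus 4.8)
The plan is to bound $\mathcal E_{L,R}(\varphi_{L,R})=\er(L;R)$ from below for fixed (small) $L$ and $R\to\infty$, then divide by $R$ and let $R\to\infty$, using $E(L)=\lim_{R\to\infty}\er(L;R)/R$. Here $\varphi=\varphi_{L,R}$ is the minimizer supplied by Proposition~\ref{prop:HK}, so $|\varphi|\le 1$. First I discard the region where the field $\curl\Ap=x_2$ is large: set $T_0=2L^{-2/3}$ and fix smooth functions $\chi,\eta$ of $x_2$ with $\chi^2+\eta^2\equiv1$, $\chi\equiv1$ on $\{|x_2|\le T_0\}$, $\chi\equiv0$ on $\{|x_2|\ge2T_0\}$; then $|\chi'|^2+|\eta'|^2\le CL^{4/3}$ on a set of area $\le CRL^{-2/3}$. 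The IMS splitting of the magnetic term, together with $|\chi\varphi|^4+|\eta\varphi|^4\le|\varphi|^4$ and $|\varphi|\le1$, gives
\[
\er(L;R)\ \ge\ \mathcal E_{L,R}(\chi\varphi)+\mathcal E_{L,R}(\eta\varphi)-\int_{\mathcal S_R}\bigl(|\chi'|^2+|\eta'|^2\bigr)|\varphi|^2 ,
\]
the last integral being $\le CRL^{2/3}$. Since $\eta\varphi$ belongs to $H^1_0$ of $\{|x_2|\ge T_0\}\cap\mathcal S_R$ and $x_2$ has a fixed sign on each component of $\{|x_2|\ge T_0\}$, the elementary inequality $\int|(\nabla-i\Ap)v|^2\ge\int|x_2|\,|v|^2$ (valid on each component) yields $\mathcal E_{L,R}(\eta\varphi)\ge(T_0-L^{-2/3})\int|\eta\varphi|^2\ge0$. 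Thus the task reduces to bounding $\mathcal E_{L,R}(\psi)$ from below, where $\psi:=\chi\varphi$, $|\psi|\le1$, supported in $\{|x_1|\le R/2,\ |x_2|\le2T_0\}$.

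Next I tile $\{|x_1|\le R/2,\ |x_2|\le2T_0\}$ by squares of side one. On the strip $\{|x_2|\le 2\}$, and on the incomplete squares along $|x_1|=R/2$, I use only the pointwise bound $|(\nabla-i\Ap)\psi|^2-L^{-2/3}|\psi|^2+\tfrac{L^{-2/3}}2|\psi|^4\ge-\tfrac{L^{-2/3}}2$, at a total cost $\ge-CRL^{-2/3}-CL^{-4/3}$. For a remaining square $K$ with centre $(\bar s_K,\bar t_K)$, $|\bar t_K|\ge 2$, I write on $K$ $\Ap=\Ab_K^{\rm c}+\Ab_K^{\rm r}+\nabla\phi_K$, where $\Ab_K^{\rm c}$ generates the constant field $\bar t_K$, $\Ab_K^{\rm r}(x)=\bigl(-\tfrac12(x_2-\bar t_K)^2,\,0\bigr)$ satisfies $|\Ab_K^{\rm r}|\le\tfrac18$ on $K$, and $\phi_K$ is a smooth phase. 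Writing $\psi=e^{i\phi_K}w$ and using $|(\nabla-i\Ab_K^{\rm c}-i\Ab_K^{\rm r})w|^2\ge(1-\epsilon)|(\nabla-i\Ab_K^{\rm c})w|^2-\epsilon^{-1}|\Ab_K^{\rm r}|^2|w|^2$ together with $|w|\le1$, the cell energy is bounded below by $(1-\epsilon)$ times a Ginzburg--Landau energy with constant field $\bar t_K$ and coefficient $L^{-2/3}/(1-\epsilon)$, minus $\tfrac1{64\epsilon}$. The dilation taking $K$ onto $Q_{r_K}$ with $r_K=|\bar t_K|^{1/2}\ (\ge1)$ turns that energy into $\tfrac{L^{-2/3}}{(1-\epsilon)|\bar t_K|}\,F_{(1-\epsilon)b_K,\,Q_{r_K}}(v)$, where $b_K:=|\bar t_K|L^{2/3}$ and $|Q_{r_K}|=|\bar t_K|$; bounding $F\ge e_N$ and invoking Remark~\ref{rem:eN} (and $\sqrt{(1-\epsilon)b_K}/r_K\le L^{1/3}$) gives the per-square bound $\int_K(\cdots)\ge L^{-2/3}g\bigl((1-\epsilon)b_K\bigr)-CL^{-1/3}-\tfrac1{64\epsilon}$.

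Summing over the $O(RL^{-2/3})$ squares, the remainders total $O\bigl(RL^{-2/3}(L^{-1/3}+\epsilon^{-1})\bigr)$, which is $O(RL^{-1})$ once $\epsilon\gtrsim L^{1/3}$. The principal terms sum to $RL^{-2/3}\sum_k g\bigl((1-\epsilon)|\bar t_k|L^{2/3}\bigr)$ (one summand per row of squares), a unit-step Riemann sum of $\int_{2\le|x_2|\le2T_0}L^{-2/3}g\bigl((1-\epsilon)L^{2/3}|x_2|\bigr)\,dx_2$; since $g$ is non-decreasing with $|g|\le\tfrac12$ by \eqref{propg}, this sum differs from the integral by $O(RL^{-2/3})$. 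The substitution $b=(1-\epsilon)L^{2/3}|x_2|$, together with $g\equiv0$ on $[1,\infty)$ and $\bigl|\int_0^s g\bigr|\le s/2$, evaluates the integral as $\tfrac{2R}{1-\epsilon}L^{-4/3}\int_0^1 g(b)\,db+O(RL^{-2/3})$, and choosing $\epsilon\asymp L^{1/3}$ replaces $(1-\epsilon)^{-1}$ by $1$ at the cost of a further $O(RL^{-1})$. Collecting everything, $\er(L;R)\ge 2RL^{-4/3}\int_0^1 g(b)\,db-CRL^{-1}-C'L^{-4/3}$; dividing by $R$ and letting $R\to\infty$ gives the theorem.

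The main obstacle is the interplay of scales. The squares must have size bounded from below (side one works) so that the $\epsilon^{-1}$-errors from the constant-field replacement, summed over the $O(RL^{-2/3})$ cells, remain $O(RL^{-1})$; but with squares of fixed size the correction $\Ab_K^{\rm r}$ is only of order one, which forces $\epsilon$ to be small, and the smallness of $\epsilon$ must not spoil the replacement of $g((1-\epsilon)b_K)$ by $g(b_K)$ — all of which is reconciled by the single choice $\epsilon\asymp L^{1/3}$. Checking that this, together with the outer-region contribution (via the elementary magnetic lower bound), the near-$\{x_2=0\}$ contribution (via the trivial bound), the IMS loss $O(RL^{2/3})$ and the Riemann-sum loss $O(RL^{-2/3})$, all stay inside the allotted remainder $O(RL^{-1})$ is the delicate part; by contrast, the cellwise constant-field approximation and the appeal to Remark~\ref{rem:eN} are routine.
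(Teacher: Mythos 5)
Your argument is correct, and its core -- tiling the region $\{1\lesssim |x_2|\lesssim L^{-2/3}\}$ by squares of side $\approx 1$, freezing the field at a point of each cell via the decomposition $\Ap=a_2\Ab_0(\cdot-c)+\mathbf F-\nabla\phi$ with $|\mathbf F|=O(\ell^2)$, rescaling to reduce the cell energy to $e_N$, invoking Remark~\ref{rem:eN}, recognizing an (upper/lower) Riemann sum of $g((1\mp\epsilon)|x_2|L^{2/3})$, and taking $\epsilon\asymp L^{1/3}$ -- is exactly the mechanism of the paper's Lemma~\ref{lem:lb}. You deviate in two genuine ways. First, the far region: the paper continues the tiling over all of $\{|x_2|\geq A\}$ and kills the cells with $|a_{j,2}|\gtrsim L^{-2/3}$ by Lemma~\ref{lem:eN=0} (vanishing of $e_N(b,r)$ for $b,r$ large), whereas you truncate at $|x_2|=2L^{-2/3}$ with an IMS partition and the commutator bound $\int|(\nabla-i\Ap)v|^2\geq\int|x_2|\,|v|^2$ on each component where $x_2$ has a fixed sign. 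This is a legitimate and arguably more elementary alternative that bypasses Lemma~\ref{lem:eN=0} entirely, at the price of the harmless IMS loss $O(RL^{2/3})$ and of a word of justification (density of compactly supported functions in the magnetic form domain) for the integration by parts behind the commutator inequality on the unbounded components. Second, the extraction of $E(L)$: the paper fixes $R=4$ and converts the bound on $\er(L;4)$ into a bound on $E(L)$ via the quantitative estimate \eqref{eq:ub-er} of Proposition~\ref{prop:HK}, while you keep $R$ free, verify that every error constant is $R$-independent (so that the $O(L^{-4/3})$ contribution of the incomplete boundary cells is the only non-extensive term), divide by $R$ and let $R\to\infty$ using the definition \eqref{eq:E}; this avoids \eqref{eq:ub-er} altogether. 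Both routes use the minimizer's bound $|\varphi_{L,R}|\leq 1$ in the same way and land on the same remainder $O(L^{-1})$.
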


The proof of Theorem~\ref{thm:lb} relies  on  the
following lemma:

\begin{lem}\label{lem:lb}
Let $M>0$. There exist two  constants $C>0$ and $A_0\geq 4$ such
that, if
$$A\geq A_0\,,\quad  R\geq 1,\quad 0<L\leq A^{-3/2}\,,
\quad u\in H^1(\mathcal S_R)\,,$$
$$\|u\|_\infty\leq 1\quad{\rm and}\quad\int_{\mathcal S_R}|u|^2\,dx\leq ML^{-2/3}R\,,$$
then
$$
\int_{\mathcal S_R\cap\{|x_2|\geq A\}}\left(|(\nabla-i\Ap)u|^2-L^{-2/3}|u|^2+\frac{L^{-2/3}}2|u|^4\right)\,dx\geq 2RL^{-4/3}\int_0^1g(b)\,d b-CRL^{-1}\,.
$$
\end{lem}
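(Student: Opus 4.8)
The plan is to exploit the fact that, far from the zero line $\{x_2=0\}$, the magnetic field $B_{\rm van}(x)=x_2$ is essentially constant on suitable scales, so that the variable-field functional locally looks like the constant-field functional whose limiting energy is $g(\cdot)$. First I would slice the region $\mathcal S_R\cap\{|x_2|\ge A\}$ into horizontal strips $\mathcal S_R\cap\{t\le |x_2|< t+\ell\}$ on which $|x_2|$ varies by at most $\ell$; on such a strip the field is comparable to a constant $t$, and after translating in $x_2$ and rescaling by the appropriate magnetic length $\ell\sim (L^{-2/3}/t)^{1/2}$ (so that the rescaled domain has the right size to see the function $g$), the energy on the strip is bounded below by $e_N(b,r)$ for $b\approx L^{-2/3}/t\cdot(\text{scale})$ times the area, up to errors coming from (i) the variation of the field across the strip and (ii) the magnetic gauge correction $\Ap(x)-\Ap(x_0)-\nabla\phi$ which, after subtracting a gradient, is quadratically small in $\ell$.

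The key quantitative input is Proposition~\ref{prop:FK}, especially the lower bound of Remark~\ref{rem:eN}, $e_N(b,r)/|Q_r|\ge g(b)-C\sqrt b/r$, together with the a priori bounds $\|u\|_\infty\le 1$ and $\int|u|^2\le ML^{-2/3}R$ which control the $|u|^4$ term and the contributions of the regions where the tiling is imperfect (boundary strips, leftover near $|x_2|=A$). Summing the strip lower bounds, the main term becomes a Riemann sum $\sum_t \ell\cdot R\cdot L^{-4/3}\cdot (\text{stuff})\, g(b(t))$; choosing the strip width $\ell$ and the number of strips appropriately, and using that $g$ is continuous, bounded, and supported in $b\le 1$ (so only finitely many — indeed $O(L^{-2/3})$ — strips near the line actually contribute), this Riemann sum converges to $R\int\cdots$ which one evaluates by the change of variables $b = L^{-2/3}/|x_2|\cdot(\text{const})$, i.e. essentially $\int_0^1 g(b)\,db$ with the factor $2$ coming from the two halves $x_2>0$ and $x_2<0$ and the Jacobian producing the overall $L^{-4/3}$. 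The constraint $L\le A^{-3/2}$ is exactly what guarantees that the window $|x_2|\ge A$ already captures all the relevant values $b\le 1$, so that truncating at $|x_2|=A$ only costs $O(RL^{-1})$.

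I expect the main obstacle to be the bookkeeping of the gauge transformation and the field-variation errors on each strip: one must show that replacing $\Ap$ by the constant-field potential centered at the strip, after subtracting an explicit gradient $\nabla\phi_t$, costs an error per strip that sums (over the $O(L^{-2/3})$ strips) to at most $O(RL^{-1})$ rather than something larger. Concretely, on a strip at height $\sim t$ with width $\ell\sim t^{-1/2}L^{-1/3}$, the gauge discrepancy is of size $\sim \ell^2 \sim t^{-1}L^{-2/3}$ in the potential, contributing $\sim (\ell^2)\cdot(\text{field})\cdot(\text{area of strip})$ to the energy after a Cauchy–Schwarz split against $|(\nabla-i\Ap)u|^2$ and $|u|^2$; verifying that the accumulated error matches the claimed $CRL^{-1}$ is the delicate step. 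A secondary technical point is handling the Dirichlet condition on $\partial\mathcal S_R=\{x_1=\pm R/2\}$ versus the Neumann energy $e_N$ used in the bound — but since $e_N\le e_D$ and we only want a lower bound, and since the boundary strips in the $x_1$-direction have total area $O(\ell R)$ which is negligible, one can afford to simply drop or crudely estimate those pieces using $\|u\|_\infty\le 1$.
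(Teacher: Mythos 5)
Your overall strategy is the paper's: tile the region $\{|x_2|\ge A\}$, freeze the field on each tile by a gauge transformation whose remainder is quadratic in the tile size, compare with the Neumann energy $e_N(b,r)$ via Remark~\ref{rem:eN}, recognize a Riemann sum of $x\mapsto g(|x_2|L^{2/3})$, and use that $g\le 0$ and $g$ vanishes on $[1,\infty)$ so that only $|x_2|\lesssim L^{-2/3}$ contributes. Two slips, one of them serious. First, the effective field strength after rescaling is $b\approx |x_2|\,L^{2/3}$ (the paper takes $b=(1-\eta)|a_{j,2}|L^{2/3}$ and $r=\sqrt{|a_{j,2}|}\,\ell$), not $L^{-2/3}/|x_2|$ as you write; with your formula the condition $b\le 1$ would select the wrong region. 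Relatedly, the truncation at $|x_2|=A$ costs nothing in a lower bound: since $g\le 0$, one simply enlarges the domain of integration down to $x_2=0$, which only decreases the integral; the hypothesis $L\le A^{-3/2}$ serves to guarantee that the tiled region actually meets $\{|x_2|\le b_0(1-\eta)^{-1}L^{-2/3}\}$, not to control a truncation error of size $O(RL^{-1})$.

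The serious issue is your tile size $\ell(t)\sim t^{-1/2}L^{-1/3}$. At heights $t$ of order the fixed constant $A_0$ this gives tiles of side $\sim L^{-1/3}\to\infty$, across which the field $x_2$ varies by $\sim L^{-1/3}\gg t$: the field is not approximately constant there, the quadratic gauge remainder $|\mathbf F|\lesssim \ell^2\sim L^{-2/3}$ is comparable to $|\Ap|$ itself, and the Cauchy--Schwarz error $\eta^{-1}\ell^4\int|u|^2\lesssim \eta^{-1}\ell^4 ML^{-2/3}R$ cannot be brought below $CRL^{-1}$ for any $\eta$ compatible with preserving the main term (one needs $\eta\lesssim L^{1/3}$ so that the loss $\eta L^{-4/3}R$ is admissible, which then forces $\ell^4\lesssim 1$). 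The paper's resolution is to take tiles of \emph{fixed} side $\ell$ comparable to $1$, independent of the height and of $L$: the rescaled square then has side $r=\sqrt{|a_{j,2}|}\,\ell\ge \sqrt{A_0}>1$, already large enough for Remark~\ref{rem:eN} to give a per-tile error $C\sqrt{b}/r$, i.e.\ $CL^{-1/3}$ per unit area, hence $CRL^{-1}$ over the relevant area $O(RL^{-2/3})$, while the accumulated gauge error $\eta^{-1}\ell^4 ML^{-2/3}R$ with $\eta=\frac12 L^{1/3}$ is also $O(RL^{-1})$. Replacing your magnetic-length tiling by this $O(1)$ tiling, the rest of your outline goes through as in the paper.
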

\begin{proof}
Let $L\in(0,1)$, $A>0$ and  $R$ and $u$ satisfy the assumptions in
Lemma~\ref{lem:lb}.  If $\mathcal D\subset\mathcal S_R$, then we use
the notation
\begin{equation}\label{eq-D}
\mathcal E(u;\mathcal D)=\int_{ \mathcal
D}\left(|(\nabla-i\Ap)u|^2-L^{-2/3}|u|^2+ \frac 12\, L^{-2/3}\,
|u|^4\right)\,dx\,.
\end{equation}
We will prove that,
\begin{equation}\label{eq:x2>A}
\mathcal E(u;\mathcal S_R\cap\{x_2\geq A\})\geq RL^{-4/3}\int_0^1g(b)\,d b-CRL^{-1}\,,
\end{equation}
and
\begin{equation}\label{eq:x2<-A}
\mathcal E(u;\mathcal S_R\cap\{x_2\leq -A\})\geq RL^{-4/3}\int_0^1g(b)\,d b-CRL^{-1}\,,
\end{equation}
for some constant $C$ independent of $L$, $R$, $A$, $L$ and $u$.

We will write the detailed proof of \eqref{eq:x2>A}. The proof of
\eqref{eq:x2<-A} is identical.

Let $r_0$ be the {\it universal} constant introduced in
Lemma~\ref{lem:eN=0}. We define $b_0=2\max(1,r_0^2)$. Thanks to
Lemma~\ref{lem:eN=0}, we have,
\begin{equation}\label{eq:eN-rem}
\forall~b\geq \frac{b_0}2\,,\quad\forall~r\geq \sqrt{b_0}\,,\quad e_N(b,r)=0\,,
\end{equation}
where $e_N$ is the Neumann ground state energy introduced in
\eqref{eq:eN}.

We define the constant $A_0=4\sqrt{b_0}$. We introduce $n\in\mathbb
N$ and
$$\ell=n^{-1}R\,.
$$
We will fix a choice of  $n$ later at the end of this proof   such
that (for all $A\geq A_0$),
\begin{equation}\label{eq:cond-A}
 R<n\leq \frac{\sqrt{A}\,R}{2\sqrt{b_0}}\,,
\end{equation}
which ensures that $0<\ell<1$, some $n$ always exists,  and
$$\sqrt{A}\,\ell\geq 2\sqrt{b_0}\,.$$
Let $(Q_{\ell,j})_{j\in\mathcal J}$ be the lattice of squares
generated by
$$Q_\ell=(-R/2,-R/2+\ell)\times(A,A+\ell)\,,$$
and covering $\R^2\setminus\{x_2\leq A\}$.

For every $j\in\mathcal J$, let $c_j=(c_{j,1},c_{j,2})\in\R^2$ be
the center of the square $Q_{\ell,j}$, i.e.
$$ Q_{\ell,j}=(-\ell/2+c_{j,1},\ell/2+c_{j,1})\times(-\ell/2+c_{j,2},\ell/2+c_{j,2})\,.$$
Let $\Ab_0$ be the magnetic potential in \eqref{eq:A0},
$j\in\mathcal J$, $a_j=(a_{j,1},a_{j,2})\in\overline{Q_{\ell,j}}$ be
an arbitrary point  and $$  \mathbf F_j(x_1,x_2)=\Big(
-\frac13(x_2-a_{j,2})^2,\frac13(x_2-a_{j,2})(x_1-a_{j,1})\Big)\,.$$
 Note that, for the sake of simplicity, we omitted the
reference to $\ell$ in the notion of  $c_j$, $a_j$ and $\mathbf
F_j$. It is easy to check that
$$\curl\Ap=\curl\Big(a_{j,2}\Ab_0+\mathbf F_j\Big)\quad{\rm in }\quad Q_{\ell,j}\,.$$
Since the square $Q_{\ell,j}$ is a simply connected domain in
$\R^2$, then there exists a real-valued smooth function $\phi_j$
defined in $Q_{\ell,j}$ such that
$$
\Ap= a_{j,2}\Ab_0+\mathbf F_j-\nabla\phi_j\quad {\rm in}\quad
Q_{\ell,j}\,.$$ Let us define the smooth function
\begin{equation} \label{eq:defphi}
\phi_j(x)=f_j(x)+a_{j,2}\Ab_0(c_j)\cdot x\quad(x\in Q_{\ell,j})\,.
\end{equation}
Now, we have,
\begin{equation}\label{eq:gauge1}
\Ap(x)= a_{j,2}\Ab_0(x-c_j)+\mathbf F_j(x)-\nabla\phi_j(x)\quad {\rm in}\quad
Q_{\ell,j}\,.\end{equation} Thanks to the definition of $\mathbf
F_j$, we have,
\begin{equation} \label{eq:gauge2}
|\mathbf F_j(x)|\leq \ell^2\quad{\rm in}\quad Q_{\ell,j}\,.
\end{equation}
Now, we write the obvious decomposition formula,
\begin{equation}\label{eq:decomp}
\mathcal E(u;\mathcal S_R\cap\{x_2\geq A\})=\sum_{j\in\mathcal J}\mathcal
E(u; Q_{\ell,j})\,.
\end{equation}
We write a lower bound for $\mathcal E(u; Q_{\ell,j})$ when
$j\in\mathcal J$. Recall that, by assumption, for all $j\in\mathcal
J$, $Q_{\ell,j}\subset\{x_2\geq A\}$.
%{\clr
%\\Bernard: Je ne comprend pas cette remarque. Par construction
%$Q_{\ell,j}\subset\{|x_2|>A\} \subset  \{|x_2|>4 \}$\\}
Let $0<\eta<\frac12$. Thanks to \eqref{eq:gauge1}, we may write,
\begin{align*}
\mathcal E(u; Q_{\ell,j})
&=\int_{Q_{\ell,j}}\left(\big|\big(\nabla-i(\Ap+\nabla\phi_j)\big)e^{i\phi_j}u\big|^2-L^{-2/3}
|e^{i\phi_j}u|^2+\frac{L^{-2/3}}2|e^{i\phi_j}u|^4\right)\,dx\\
&\geq \int_{Q_{\ell,j}}\left((1-\eta)\big|\big(\nabla-ia_{j,2}\Ab_0(x-c_j)\big)e^{i\phi_j}u\big|^2-
L^{-2/3}|e^{i\phi_j}u|^2+\frac{L^{-2/3}}2|e^{i\phi_j}u|^4\right)\,dx\\
&
\hskip0.5cm
-4\eta^{-1}\int_{Q_{\ell,j}} |F_j(x)|^2 \,|u|^2\,dx\,.
\end{align*}
Using the bound in \eqref{eq:gauge2}, we get further,
\begin{multline*}
\mathcal E(u; Q_{\ell,j}) \geq
\int_{Q_{\ell,j}}\Bigg((1-\eta)\big|\big(\nabla-ia_{j,2}\Ab_0(x-c_j)\big)e^{i\phi_j}u\big|^2\\-L^{-2/3}|e^{i\phi_j}u|^2+\frac{L^{-2/3}}4|e^{i\phi_j}u|^2\Bigg)\,dx
-C\eta^{-1}\ell^4\int_{Q_{\ell,j}}|u|^2\,dx\,.
\end{multline*}
Recall the definition of the energy in \eqref{eq:eN}. A change of
variable yields,
\begin{equation}\label{eq:en-square}
\mathcal E(u; Q_{\ell,j})
\geq \frac1{L^{2/3}|a_{j,2}|}\,e_N\Big((1-\eta)|a_{j,2}|L^{2/3}\,,\,\sqrt{|a_{j,2}|}\,\ell\Big)-C\eta^{-1}\ell^4\int_{Q_{\ell,j}}|\varphi_{L,R}|^2\,dx\,.
\end{equation}
Let us introduce the two new sets of indices,
\begin{multline*}\widetilde{\mathcal J}=\{j\in\mathcal
J~:~Q_{\ell,j}\cap\{|x_2|\leq
\frac12b_0(1-\eta)^{-1}L^{-2/3}\}\not=\emptyset\}\\\quad{\rm
and}\quad \mathcal J_\infty=\{j\in\mathcal
J~:~Q_{\ell,j}\subset\{|x_2|\geq
\frac12b_0(1-\eta)^{-1}L^{-2/3}\}\}\,.\end{multline*}
 Note
that $\mathcal J=\widetilde{\mathcal J}\cup\mathcal J_\infty$ and we
can decompose every sum over $\mathcal J$ in the following obvious
way
\begin{equation}\label{eq:sum=I+II}
\sum_{j\in\mathcal J}=\sum_{j\in\widetilde{\mathcal
J}}+\sum_{j\in\mathcal J_\infty}\,.
\end{equation}
Furthermore, the
set $\widetilde{\mathcal J}$ is non-empty if $A\leq
b_0(1-\eta)^{-1}L^{-2/3}$. Since $\eta\in(0,\frac12)$ and $b_0\geq
1$, this last condition  is satisfied when $0<L\leq A^{-3/2}$. We
will assume this condition henceforth.

Since $|a_{j,2}|\geq A$ and $b_0\geq 1$, then the condition in
\eqref{eq:cond-A} ensures that
$$\sqrt{|a_{j,2}|}\,\ell\geq 2\sqrt{b_0}> 1\,.$$
 Now, if $j\in\widetilde{\mathcal J}$, then
%(we use that $\ell,L\in(0,1)$, $\eta\in(0,\frac12)$ and $b_0\geq1$),
%\begin{equation}\label{alt}
%(1-\eta)|a_{j,2}|L^{2/3}\leq \frac12b_0+(1-\eta)L^{2/3}\ell<b_0\,,
%\end{equation}
%and
 we can
 use the lower bound in \eqref{rem:eN}  with $b=
(1-\eta)|a_{j,2}|L^{2/3}  $ and $r=  \sqrt{|a_{j,2}|}\,\ell $  to
write, for a different constant $C>0$,
$$
\mathcal E(u; Q_{\ell,j})
\geq L^{-2/3} \ell^2 \Big(g\big((1-\eta)|a_{j,2}|L^{2/3}\big)- \frac{C}{\ell} \sqrt{1-\eta}\,L^{1/3}\Big)-C\eta^{-1}\ell^4\int_{Q_{\ell,j}}|u|^2\,dx\,.
$$
If $j\in\mathcal J_\infty$, then  $(1-\eta)|a_{j,2}|L^{2/3}\geq
\frac12b_0$ and we can use the identity in \eqref{eq:eN-rem} to
write
$$e_N\Big((1-\eta)|a_{j,2}|L^{2/3}\,,\,\sqrt{|a_{j,2}|}\,\ell\Big)=
0\,.$$
Now  we can infer from \eqref{eq:decomp} the following estimate,
$$
 \mathcal E(u;\mathcal S_R\cap\{x_2\geq A\})\geq L^{-2/3}\sum_{j\in\widetilde{\mathcal J}}
\Big(g\big((1-\eta)|a_{j,2}|L^{2/3}\big)- \frac{C}{\ell}\,
\sqrt{1-\eta}\,L^{1/3}\Big)\ell^2 -C\eta^{-1}\ell^4\int_{\mathcal
S_R}|u|^2\,dx\,.
$$
%{\clb where we have a priori added the condition that $a_{j,2}$ satisfies \eqref{alt}. But observing that $g$ is always negative this additional condition on $a_{j,2}$ can be deleted.\\}
Using the assumption on the $L^2$-norm of $u$  (see
Lemma~\ref{lem:lb}), we get further,
$$
 \mathcal E(u;\mathcal S_R\cap\{x_2\geq A\})\geq L^{-2/3}\sum_{j\in\widetilde{\mathcal J}}
\Big(g\big((1-\eta)|a_{j,2}|L^{2/3}\big)-\frac{C}{\ell}\, \sqrt{1-\eta}\,L^{1/3}\Big)\ell^2\\
-C\eta^{-1}\ell^4RL^{-2/3}\,.
$$
For any $j \in \mathcal J$, we choose in $\overline{Q_{j,\ell}}$ the previously free point $a_j$ as   $a_j :=
\big(c_{j,1}, c_{j,2} + \frac \ell 2\big).$\\
Since
$g(\cdot)$ is a non decreasing function, this choice yields that,
$$
g\big( (1-\eta) a_{j,2} L^{\frac 23}\big)= \sup_{ t\in (-\frac \ell 2 + c_{j,2}, c_{j,2} + \frac \ell 2) } g\big( (1-\eta) t L^{\frac 23}\big)\,.
$$
%{\clr Bernard, en fait $a_{j,2} = c_{j,2} + \frac \ell 2$\\}
In that way,  the sum
$$\ell^2 \sum_{j\in\widetilde{\mathcal J}}
g\big((1-\eta)|a_{j,2}|L^{2/3}\big)$$ is an upper Riemann sum  of
the function $(x_1,x_2) \mapsto   g( (1-\eta) |x_2| L^{\frac 23})$
on $\mathcal D_{L,R}:=\bigcup_{j\in \widetilde{ \mathcal J}}
Q_{\ell,j}$ and
%
%
%
%In that way we get,
\begin{multline*}
 \mathcal E(u;\mathcal S_R\cap\{x_2\geq A\})\geq L^{-2/3}\int_{\mathcal D_{L,R}}
g\big((1-\eta)|x_2|L^{2/3}\big)\,dx_1dx_2-C(1-\eta)^{-1/2}\,L^{-1}R\\
-C\eta^{-1}\ell^4RL^{-2/3}\,.
\end{multline*}
We now observe that, by definition of $\widetilde{\mathcal J}$ and
$\mathcal J$,
$$ \mathcal D_{L,R}=\bigcup_{j\in\widetilde{\mathcal J}}Q_{\ell,j}\subset\{(x_1,x_2)\in\R^2~:~|x_1|\leq R/2\quad{\rm
and}\quad A< x_2\leq b_0(1-\eta)^{-1}L^{-2/3}+\ell\}\,.$$
 Since $g(\cdot)$ is valued in
$]-\infty,0]$ and $g(b)=0$ for all $b\geq 1$,
 then
$$
\int_{\mathcal D_{L,R}}
g\big((1-\eta)|x_2|L^{2/3}\big)\,dx_1dx_2\geq
\int_{0\leq x_2\leq b_0(1-\eta)^{-1}L^{-2/3}+\ell}\int_{|x_1|\leq R/2}
g\big((1-\eta)|x_2|L^{2/3}\big)\,dx_1dx_2\,,$$ and a simple change of variable yields,
$$
\int_{\mathcal D_{L,R}}
g\big((1-\eta)|x_2|L^{2/3}\big)\,dx_1dx_2\geq
R(1-\eta)^{-1}L^{-2/3}\int_0^1 g(t)\,dt\,.
$$
 Therefore, we have
proved the following lower bound,
$$
 \mathcal E(u;\mathcal S_R\cap\{x_2\geq A\})\geq L^{-4/3}R(1-\eta)^{-1}\int_0^1 g(t)\,dt
-C(1-\eta)^{-1/2}\,L^{-1}R -C\eta^{-1}\ell^4RL^{-2/3}\,.
$$
Now, we choose $n=[R+1]$ where $[\,\cdot\,]$ denotes the integer part.
 In that way, the condition in \eqref{eq:cond-A} is
satisfied for all $R\geq 1$ and $A\geq A_0=4\sqrt{b_0}$. Moreover,
we have the lower bound,
$$
 \mathcal E(u;\mathcal S_R\cap\{x_2\geq A\})\geq 2L^{-4/3}R(1-\eta)^{-1}\int_0^1 g(t)\,dt
-C(1-\eta)^{-1/2}\,L^{-1}R -C\eta^{-1}RL^{-2/3}\,.$$
Now, we choose $\eta=\frac12L^{1/3}$ so that, for all $L\in(0,1)$,
$\eta\in(0,\frac12)$, $\eta^{-1}L^{-2/3}=2L^{-1}$, $\eta
L^{-4/3}=\frac12L^{-1}$ and the lower bound in \eqref{eq:x2>A} is
satisfied.
\end{proof}

\begin{proof}[Proof of Theorem~\ref{thm:lb}]
We use the conclusion in Lemma~\ref{lem:lb} with the following
choices,
$$
R=4\,,\quad A=A_0\,,\quad 0<L\leq L_0:=A^{-3/2}\,,\quad u=\varphi_{L,R}\,,
$$
where $\varphi_{L,R}$ is a
   minimizer of $\mathcal
E_{L,R}$. Notice that, the estimates  in Proposition~\ref{prop:HK}
ensure that the function $u=\varphi_{L,R}$ satisfies the assumptions
in Lemma~\ref{lem:lb}

Thanks to \eqref{eq:ub-er}, we may write,
\begin{equation}\label{eq:1stlb}
E(L)\geq \frac{\mathcal E_{L,R}(\varphi_{L,R})}{R}- C(1+L^{-2/3})\,.
\end{equation}
%{\clr Bernard: Par certains c\^ot\'es c'est un peu bizarre car $\ell$ n'est pas choisi petit et on ne voit donc pas pourquoi plus bas, on sera aussi pr\`es de l'int\'egrale indiqu\'ee. Ai-je loup\'e quelque chose ? L'interpr\`etation correcte est que l'on travaille avec $L\rightarrow  0$ et que moralement la somme de Riemann que l'on regarde est pour la fonction $(x_1,x_2) \mapsto   g( (1-\eta) |x_2|, x_1)$. \\}
%
%
%{\clr C'est bizarre ! Tout \'etant ind\'ependant de $R$ au dessus
%(sauf les conditions de validit\'e sur $R$), qu'est ce qui
%t'emp\^eche de prendre $R=1$ depuis le d\'ebut ?\\}
By splitting the integral over $\mathcal S_R$ into two parts
$$
\int_{\mathcal S_R}=\int_{\mathcal S_R\cap\{|x_2|\geq A\}}+\int_{\mathcal S_R\cap\{|x_2|\leq A\}}\,,
$$
then using that
$$
|(\nabla-i\Ap)\varphi_{L,R}|^2-L^{-2/3}|\varphi_{L,R}|^2+\frac{L^{-2/3}}{2}|\varphi_{L,R}|^4\geq -L^{-2/3}|\varphi_{L,R}|^2\,,
$$
we get,
\begin{multline*}
\mathcal E_{L,R}(\varphi_{L,R})\geq \int_{\mathcal
S_R\cap\{|x_2|\geq A\}}
\left(|(\nabla-i\Ap)\varphi_{L,R}|^2-L^{-2/3}|\varphi_{L,R}|^2+\frac{L^{-2/3}}2|\varphi_{L,R}|^4\right)\,dx\\
 -\int_{\mathcal S_R\cap\{|x_2|\leq
A\}}L^{-2/3}|\varphi_{L,R}|^2\,dx\,.
\end{multline*}
Now, we use the conclusion in Lemma~\ref{lem:lb} and the bound
$\|\varphi_{L,R}\|_\infty\leq 1$ to write,
$$
\mathcal E_{L,R}(\varphi_{L,R})\geq 2RL^{-4/3}\int_0^1g(b)\,d
b-CRL^{-1} -2ARL^{-2/3}\,.$$ We insert this into  \eqref{eq:1stlb} to finish the proof of Theorem~\ref{thm:lb}.
\end{proof}

\section{Proof of Theorem~\ref{thm:HK}: Upper bound}

The aim of this section is to prove the following upper bound
version of Theorem~\ref{thm:HK}. Note that we provide an explicit
control of the remainder term.

\begin{thm}\label{thm:ub}
There exist two constants $L_0>0$ and $C>0$ such that, for all
$L\in(0,L_0)$,
$$E(L)\leq 2L^{-4/3}\int_0^1g(b)\,db+CL^{-1/3}\,,$$
where $E(\cdot)$ and $g(\cdot)$ are the energies introduced in
\eqref{eq:E} and \eqref{eq:g} respectively.
\end{thm}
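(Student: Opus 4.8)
The plan is to produce a competitor for the functional $\mathcal E_{L,R}$ and to read the estimate off the variational inequality $E(L)\le\er(L;R)/R$ of Proposition~\ref{prop:HK}(4): it suffices to find, for a conveniently chosen $R$, a function $u\in H^1_{{\rm mag},0}(\mathcal S_R)$ with
$$\mathcal E_{L,R}(u)\ \le\ 2RL^{-4/3}\int_0^1 g(b)\,db+CRL^{-1/3}.$$
I would build $u$ as the ``upper-bound mirror'' of the construction in Lemma~\ref{lem:lb}: tile $\mathcal S_R$ by small squares $Q_{\ell,j}$ and, on each one, put a rescaled, gauge-transformed copy of a \emph{Dirichlet} ground state of the constant-field functional $F_{b,Q_r}$. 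Since Dirichlet minimizers vanish on $\partial Q_r$, the local pieces match continuously and glue (and extend by $0$ outside the tiled region) to an element of $H^1_{{\rm mag},0}(\mathcal S_R)$; this is the feature that makes the energies $e_D$, rather than the Neumann ones, the right objects here.

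Concretely, I would split $\mathcal S_R$ into a bounded \emph{core} $\mathcal C=\{|x_2|\le A_0\}\cap\mathcal S_R$ (with $A_0$ the constant of Lemma~\ref{lem:lb}), an \emph{active zone} $\mathcal A=\{A_0\le|x_2|\le L^{-2/3}\}\cap\mathcal S_R$, and the exterior $\{|x_2|>L^{-2/3}\}$, on which $g(|x_2|L^{2/3})=0$ and $u\equiv0$. In $\mathcal A$, over a square $Q_{\ell,j}$ centred at height $c_{j,2}$ I use the same gauge as in Lemma~\ref{lem:lb}, $\Ap=a_{j,2}\Ab_0(\cdot-c_j)+\mathbf F_j-\nabla\phi_j$ with $\|\mathbf F_j\|_\infty\le\ell^2$, and take $u|_{Q_{\ell,j}}=e^{i\phi_j}$ times the $\sqrt{|a_{j,2}|}$-dilation of a Dirichlet minimizer of $F_{b_j,Q_{r_j}}$, with $b_j=|a_{j,2}|L^{2/3}$ and $r_j=\sqrt{|a_{j,2}|}\,\ell\ge1$. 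Absorbing the cross terms through $\|\mathbf F_j\|_\infty\le\ell^2$ and using the upper bound $e_D(b_j,r_j)\le r_j^2g(b_j)+Cr_j\sqrt{b_j}$ from \eqref{eq:g'} (valid also for $b_j\ge1$ by Remark~\ref{rem:ext(6)}), one reaches a per-square estimate of the shape
$$\mathcal E(u;Q_{\ell,j})\ \le\ L^{-2/3}\ell^2\,g(b_j)+C L^{-1/3}\ell+C\ell^4 L^{-1/3}.$$
Choosing $a_{j,2}$ to be the point of $\overline{Q_{\ell,j}}$ where $|x_2|$ is smallest makes $g(b_j)$ the infimum of $g(|x_2|L^{2/3})$ over the square (monotonicity of $g$), so $\sum_j L^{-2/3}\ell^2 g(b_j)$ is an upper Riemann sum of $L^{-2/3}g(|x_2|L^{2/3})$ on $\mathcal A$; integrating and changing variables $b=|x_2|L^{2/3}$ gives $2RL^{-4/3}\int_{A_0L^{2/3}}^{1}g(b)\,db$ up to the accumulated remainder.

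In the core, where the magnetic length $|x_2|^{-1/2}$ is of the same order as the field-variation scale $|x_2|$ and the constant-field model is not available, I would instead take $u$ close to the constant $1$, cut off near $\partial\mathcal S_R$ and near $x_2=\pm A_0$ over a layer of width $\sim L^{1/3}$ so that it matches the exterior and active pieces while remaining in $H^1_{{\rm mag},0}$. With $|u|\equiv1$ on the bulk of $\mathcal C$, the condensation term $-L^{-2/3}|u|^2+\tfrac12L^{-2/3}|u|^4$ contributes $-\tfrac12L^{-2/3}|\mathcal C|=-RA_0L^{-2/3}$, the magnetic energy is $O(R)$ since $|\Ap|\le A_0^2/2$ there, and the cut-off layers cost $\sim RL^{-1/3}$ (the width $\sim L^{1/3}$ balances the gradient cost $\sim L^{-1/3}$ against the lost condensation energy $\sim L^{-2/3}\cdot L^{1/3}$); by continuity of $g$ at $0$ and $g(0)=-\tfrac12$ this matches $2RL^{-4/3}\int_0^{A_0L^{2/3}}g(b)\,db$ up to a lower-order term. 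Adding the three contributions gives $\mathcal E_{L,R}(u)\le 2RL^{-4/3}\big(\int_0^{A_0L^{2/3}}g+\int_{A_0L^{2/3}}^1 g\big)+(\text{errors})=2RL^{-4/3}\int_0^1 g(b)\,db+(\text{errors})$, and dividing by $R$ (recall $E(L)\le\er(L;R)/R$ for \emph{every} $R$, so any convenient $R$ may be used) yields the claim, modulo the estimate of the errors.

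The delicate point is exactly that error estimate. In the active zone the surface term $\sum_j CL^{-1/3}\ell$ scales like the total perimeter of the tiling and wants $\ell$ as large as possible, while the field-variation term $\sum_j C\ell^4L^{-1/3}$ wants $\ell$ small, and these are tied together by the area identity $\sum_j\ell^2\sim RL^{-2/3}$; a single uniform mesh seems to yield only a remainder of order $RL^{-1}$, so to reach $RL^{-1/3}$ one must let the mesh size depend on the height $|x_2|$ (fine where the field is large, coarse near the zero line) and/or sharpen the per-square estimate, typically by exploiting the boundary-layer structure of the Dirichlet minimizers $u_{b,r}$ to improve the control of $\int_{Q_r}|(\nabla-i\Ab_0)u_{b,r}|^2$. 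Organising this balance, together with keeping the Riemann-sum error and the matching/cut-off losses at $|x_2|=A_0$, $|x_2|=L^{-2/3}$ and $x_1=\pm R/2$ all of size $O(RL^{-1/3})$, is what I expect to be the crux; the remaining assembly runs in close parallel with \S3.
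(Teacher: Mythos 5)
Your construction is, in its essentials, the paper's own: tile the region $\{\ell^{-2}\le x_2\lesssim L^{-2/3}\}$ by squares, put on each one a gauge-transformed, dilated Dirichlet minimizer of $F_{b,Q_r}$ with $b=(1+\eta)a_2L^{2/3}$ and $r=\sqrt{a_2}\,\ell$, control the gauge error through $|\mathbf F|\le C\ell^2$, use \eqref{eq:g'} together with the monotonicity of $g$ to recognize an upper Riemann sum, extend by zero, reflect in $x_2$, and conclude via \eqref{eq:lb-er}. The only structural difference is your core region: the paper simply takes $u\equiv 0$ on $\{|x_2|<\ell^{-2}\}$ and absorbs the resulting loss (the $\ell^{-2}R$ term in its integral estimate, i.e.\ $O(RL^{-2/3})$ in energy) into the remainder, whereas you patch in $u\approx 1$ with cut-off layers. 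Your patch is not needed for the $o(L^{-4/3})$ statement of Theorem~\ref{thm:HK}, but it would be needed to beat $O(L^{-2/3})$; note, though, that matching the core contribution to $2RL^{-4/3}\int_0^{A_0L^{2/3}}g$ ``by continuity of $g$ at $0$'' only gives an error $o(RL^{-2/3})$, not $O(RL^{-1/3})$, unless you invoke a rate for $g(b)+\tfrac12$ as $b\to 0$.

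The delicate point you flag at the end is real, and you should not expect to find its resolution in the paper. With a uniform mesh the summed boundary term $C\sqrt{b}\,r$ coming from \eqref{eq:g'} contributes $O(RL^{-1}\ell^{-1})$ exactly as you compute; moreover the paper's own parameter choice $\eta=L$, $\ell=\tfrac14$ makes the summed gauge error $CL^{-2/3}\eta^{-1}\ell^4|\mathcal D_\ell|$ of order $L^{-5/3}$ (and $|\mathcal D_\ell|=O(RL^{-2/3})$ rather than the stated $O(RL^{-1/3})$), so the passage from Lemma~\ref{lem:ub} to \eqref{eq:ub} and on to the final display does not deliver the exponent $-1/3$ as written. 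What this construction actually yields, after choosing $\eta$ sensibly (e.g.\ $\eta\sim L^{1/3}$, balancing the gauge error against the $(1+\eta)$-dilation of the main term), is $E(L)\le 2L^{-4/3}\int_0^1 g(b)\,db+CL^{-1}$ --- the same order of remainder as the lower bound in Theorem~\ref{thm:lb}, and amply sufficient for Theorem~\ref{thm:HK} since $L^{-1}=o(L^{-4/3})$. So your proposal is the intended argument; the genuine gap is the stated exponent $-1/3$ in Theorem~\ref{thm:ub}, which neither your sketch nor the paper's proof establishes, and which you would indeed have to earn by a height-dependent mesh or a sharper per-square bound, as you suspect.
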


The proof of Theorem~\ref{thm:ub} relies on  the following lemma:

\begin{lem}\label{lem:ub}
Let $R\geq 1$, $L>0$,  $\ell\in(0,1)$, $\eta\in(0,1)$,
$c=(c_1,c_2)\in \R^2$ and
$$
 Q_\ell=(-\ell/2+c_1,c_1+\ell/2)\times
(-\ell/2+c_2,c_2+\ell/2)\,.$$
Suppose that
$$Q_\ell\subset\{(x_1,x_2)\in\R^2~:~|x_1|\leq R/2\quad{\rm and}\quad
|x_2|\geq \frac1{\ell^2}\}\,.$$
%{\clr Bernard: Ne serait-ce pas plut\^ot:  $|x_2| \geq \frac{1}{\ell^2}$ ?\\}
For all $R\geq 1$, it holds,
\begin{multline*}
\inf\{\mathcal E_{L,R}(w)~:~w\in H^1_0(Q_\ell)\}
\\
\leq
L^{-2/3}\int_{Q_\ell}g\Big((1+\eta)L^{2/3}|x_2|\Big)\,dx_1dx_2+CL^{-2/3}\Big(\ell^{-1}L^{1/3}+\eta^{-1}\ell^4\Big)\ell^2\,,\end{multline*}
where, for all $w\in H^1_0(Q_\ell)$, $\mathcal E_{L,R}(w)$ is
introduced in \eqref{eq-gs-er''} by setting $w=0$ outside $Q_\ell$,
and $C>0$ is a constant independent of $\ell$, $\eta$,  $c$, $L$ and
$R$.
\end{lem}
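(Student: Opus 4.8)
The plan is to construct an explicit trial state $w$ on the small square $Q_\ell$ by gauging away the linear part of $\Ap$ and pasting in a suitably rescaled Dirichlet minimizer of the constant-field functional $F_{b,Q_r}$. First I would fix the point $a=(c_1,c_2+\tfrac\ell2)\in\overline{Q_\ell}$ (the vertical endpoint maximizing $|x_2|$) and write, exactly as in the proof of Lemma~\ref{lem:lb}, $\Ap(x)=|a_2|\Ab_0(x-c)+\FF(x)-\nabla\phi$ on $Q_\ell$, where $\FF$ is the quadratic remainder with $|\FF|\le\ell^2$ on $Q_\ell$ and $\phi$ is a smooth gauge. The point of choosing $a$ at the top edge is that, since $g$ is nondecreasing, $g\big((1+\eta)L^{2/3}|a_2|\big)=\inf_{t}g\big((1+\eta)L^{2/3}|t|\big)$ over the $x_2$-range of $Q_\ell$, so a constant-field trial state built with field strength $|a_2|$ will be comparable to the lower Riemann sum, hence to the integral $\int_{Q_\ell}g\big((1+\eta)L^{2/3}|x_2|\big)\,dx_1dx_2$ up to an $O(\ell^2\cdot\ell)$ error from the variation of $g$ across the square (using that $g$ is Lipschitz-type controlled, or just monotone plus the explicit modulus from \eqref{eq:g'}).

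Next I would take $u_{b,r}$ a Dirichlet minimizer of $F_{b,Q_r}$ with $b=(1+\eta)|a_2|L^{2/3}$ and $r=\sqrt{|a_2|}\,\ell$ (note $r\ge \sqrt{|a_2|}\,\ell\ge \ell^{-1}\cdot\ell=1$ by the hypothesis $|x_2|\ge\ell^{-2}$ on $Q_\ell$, so Proposition~\ref{prop:FK}(6) applies), extend it by zero, rescale it to $Q_\ell$ via $v(x)=u_{b,r}\big(\sqrt{|a_2|}(x-c)\big)$, and set $w=e^{i\phi}v$. This $w$ lies in $H^1_0(Q_\ell)$. The change of variables is the mirror of \eqref{eq:en-square}: it gives
\begin{multline*}
\int_{Q_\ell}\Big(\big|(\nabla-i|a_2|\Ab_0(x-c))w\big|^2-L^{-2/3}|w|^2+\tfrac{L^{-2/3}}2|w|^4\Big)\,dx\\
=\frac{1}{L^{2/3}|a_2|}\,F_{b,Q_r}(u_{b,r})=\frac{e_D(b,r)}{L^{2/3}|a_2|}\le L^{-2/3}\ell^2\Big(g(b)+\tfrac{C\sqrt b}{r}\Big),
\end{multline*}
where in the last step I used \eqref{eq:g'} and $r=\sqrt{|a_2|}\,\ell$, $b=(1+\eta)|a_2|L^{2/3}$, so $\sqrt b/r\le C\,L^{1/3}/\ell$ (here one uses $1+\eta\le 2$; the factor $|a_2|$ cancels between $\sqrt b$ and $r$ up to the remaining $\sqrt{L^{2/3}}=L^{1/3}$). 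It remains to pass from the model magnetic potential $|a_2|\Ab_0(x-c)$ back to $\Ap$: using \eqref{eq:gauge1}, \eqref{eq:gauge2} and a Cauchy–Schwarz splitting with parameter $\eta$ exactly as in Lemma~\ref{lem:lb} (but now producing an upper bound), one has
\begin{align*}
|(\nabla-i\Ap)w|^2\le (1+\eta)\big|(\nabla-i|a_2|\Ab_0(x-c))w\big|^2+C\eta^{-1}\ell^4|w|^2,
\end{align*}
and since $|w|\le\|u_{b,r}\|_\infty\le1$ (Proposition~\ref{prop:FK}(2)) and $|Q_\ell|=\ell^2$, the extra term contributes at most $C\eta^{-1}\ell^4\cdot L^{-2/3}\ell^2$ after multiplying through by $L^{-2/3}$. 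The $(1+\eta)$ prefactor on the gradient is harmless: it multiplies a term which, by the computation above, is bounded by $L^{-2/3}\ell^2(g(b)+C\ell^{-1}L^{1/3})\le L^{-2/3}\ell^2\cdot C\ell^{-1}L^{1/3}$ in absolute value when $g(b)\le 0$, so $(1+\eta)$ times it differs from it by at most $\eta\cdot C L^{-1/3}\ell$, absorbed in the stated remainder. Finally, replacing $g(b)=g\big((1+\eta)L^{2/3}|a_2|\big)$ by $\int_{Q_\ell}g\big((1+\eta)L^{2/3}|x_2|\big)\,dx_1dx_2/\ell^2$ costs the Riemann-sum error already noted, and collecting all errors into $CL^{-2/3}\big(\ell^{-1}L^{1/3}+\eta^{-1}\ell^4\big)\ell^2$ yields the claim, since $\inf\{\mathcal E_{L,R}(w):w\in H^1_0(Q_\ell)\}$ is bounded above by $\mathcal E_{L,R}$ evaluated at our trial state.

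The main obstacle is bookkeeping rather than conceptual: one must track how the three scaling relations ($b\sim|a_2|L^{2/3}$, $r\sim\sqrt{|a_2|}\,\ell$, field strength $|a_2|$) interact so that the potentially large factor $|a_2|$ — which ranges up to order $L^{-2/3}$ — cancels everywhere it appears, leaving remainders depending only on $\ell$, $\eta$ and $L$ and \emph{not} on $|a_2|$ or on $c$. The crucial cancellation is $\sqrt b/r = \sqrt{(1+\eta)|a_2|L^{2/3}}/(\sqrt{|a_2|}\,\ell)=\sqrt{1+\eta}\,L^{1/3}/\ell$, which is exactly why \eqref{eq:g'} was stated with the $\sqrt b/r$ normalization; one must also check that $r\ge1$ uniformly, which is precisely guaranteed by the hypothesis $|x_2|\ge\ell^{-2}$ on $Q_\ell$. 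A secondary point is to confirm that the quadratic-gauge error $\eta^{-1}\ell^4$ from $|\FF|\le\ell^2$ is genuinely of the stated form; this is immediate once $|w|\le1$ is invoked. No regularity issue arises because everything happens on a fixed square with the explicit smooth minimizer $u_{b,r}$ supplied by Proposition~\ref{prop:FK}.
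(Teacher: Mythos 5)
Your construction follows the paper's proof closely: gauge away the linear part of $\Ap$ on $Q_\ell$, transplant a Dirichlet minimizer $u_{b,r}$ with $b=(1+\eta)|a_2|L^{2/3}$ and $r=\sqrt{|a_2|}\,\ell$, use the hypothesis $|x_2|\ge\ell^{-2}$ to guarantee $r\ge1$ so that \eqref{eq:g'} applies, and exploit the cancellation $\sqrt b/r=\sqrt{1+\eta}\,L^{1/3}/\ell$. There is, however, a genuine error in the last step. You choose $a=(c_1,c_2+\tfrac\ell2)$, the endpoint \emph{maximizing} $|x_2|$, and claim that monotonicity of $g$ makes $g\big((1+\eta)L^{2/3}|a_2|\big)$ the \emph{infimum} of $g\big((1+\eta)L^{2/3}|t|\big)$ over the $x_2$-range of $Q_\ell$. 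It is the opposite: a nondecreasing $g$ evaluated at the largest value of $|x_2|$ gives the \emph{supremum}, so your choice yields $\ell^2\,g\big((1+\eta)L^{2/3}|a_2|\big)\ge\int_{Q_\ell}g\big((1+\eta)L^{2/3}|x_2|\big)\,dx_1dx_2$ --- the wrong direction for an upper bound on the energy. The paper takes $a=(c_1,c_2-\tfrac\ell2)$ when $Q_\ell\subset\{x_2\ge\ell^{-2}\}$ (in general, the endpoint \emph{minimizing} $|x_2|$), for which monotonicity alone gives the needed inequality exactly, with no extra error. Your fallback --- absorbing the discrepancy into an $O(\ell^2\cdot\ell)$ ``variation of $g$ across the square'' term --- is not supported by what the paper establishes: only continuity and monotonicity of $g$ are available, and \eqref{eq:g'} controls $e_D(b,r)/|Q_r|-g(b)$, not the modulus of continuity of $g$, so that variation cannot be quantified. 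The fix is simply to take the other endpoint.

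A secondary point: your treatment of the $(1+\eta)$ prefactor is circular. Defining $b=(1+\eta)|a_2|L^{2/3}$ is exactly what makes the rescaled identity
$(1+\eta)\int_{Q_\ell}\big|(\nabla-i|a_2|\Ab_0(x-c))w\big|^2\,dx-L^{-2/3}\int_{Q_\ell}|w|^2\,dx+\tfrac{L^{-2/3}}2\int_{Q_\ell}|w|^4\,dx=\tfrac{1}{L^{2/3}|a_2|}F_{b,Q_r}(u_{b,r})$
hold with nothing left over; but you state the change of variables without the $(1+\eta)$ and then try to bound $\eta$ times the gradient term by the full functional value. The gradient term alone is not controlled by $e_D(b,r)$ (the functional contains the large negative contribution $-L^{-2/3}\int|w|^2$), so that estimate does not follow as written; folding the $(1+\eta)$ into $b$ from the start, as the paper does, removes the issue.
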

%{\clr Bernard: Je ne vois pas de $\delta$ !}
\begin{proof}
We write the details of the proof when $Q_\ell\subset\{x_2\geq
\ell^{-2}\}$. The case $Q_\ell\subset\{x_2\leq -\ell^{-2}\}$ can be
handled similarly. Let $a=(a_1,a_2)\in \overline{Q_\ell}$. As we did
in the derivation of \eqref{eq:gauge1}, we may define a smooth
function $\phi$ in $Q_\ell$  such that,
\begin{equation}\label{eq:gauge3}
\Ap(x)= a_{2}\Ab_0(x-c)+\mathbf F(x)-\nabla\phi(x)\quad {\rm in}\quad
Q_{\ell}\,,
\end{equation}
and
\begin{equation}\label{eq:gauge4}
|\Fb(x)|\leq C\ell^2\quad{\rm in}~Q_\ell\,,
\end{equation}
where $C>0$ is a universal constant. \\
We introduce the following
three parameters,
\begin{equation}\label{eq:b,r}
\eta\in(0,1)\,,\quad b=a_2(1+\eta)L^{2/3}\,,\quad r=\sqrt{a_2}\,\ell\,.
\end{equation}
Define the following function,
$$u(x)=e^{i\phi(x)}u_{b,r}\big(\sqrt{a_2}\,(x-c)\big)\,,\quad x\in
Q_\ell\,,$$ where $u_{b,r}\in H^1_0(Q_r)$ is a minimizer of the energy $e_D(b,r)$ in \eqref{eq:eD}.

Clearly, $u\in H^1_0(Q_\ell)$. Hence,
$$
\inf\{\mathcal E_{L,R}(w)~:~w\in H^1_0(Q_\ell)\}\leq\mathcal E_{L,R}(u)\,.
$$
Using \eqref{eq:gauge3} and the Cauchy-Schwarz inequality, we
compute the energy of $u$ as follows,
\begin{multline*}
\mathcal E_{L,R}(u)\leq
\int_{Q_\ell}\left((1+\eta)|(\nabla-ia_2\Ab_0(x-c))e^{-i\phi}u|^2-L^{-2/3}|u|^2+\frac{L^{-2/3}}2|u|^4\right)\,dx\\
+4\eta^{-1}\int_{Q_\ell}|\Fb(x)|^2|u|^2\,dx\,.
\end{multline*}
Using \eqref{eq:gauge4}, the bound $|u_{b,r}|\leq 1$, a change of
variable and \eqref{eq:b,r}, we get,
$$\mathcal E_{L,R}(u)\leq
\frac{L^{-2/3}}{a_2}F_{b,r}(u_{b,r})+C\eta^{-1}\ell^6\,,$$ where
$F_{b,r}$ is the functional in \eqref{eq:rGL}. \\
Our choice of
$u_{b,r}$ ensures that,
$$F_{b,r}(u_{b,r})=e_D(b,r).$$
Again,  thanks to the choice of $b$ and $r$ in \eqref{eq:b,r}, we
get,
$$\mathcal E_{L,R}(u)\leq
\frac{L^{-2/3}}{a_2}e_D\Big((1+\eta)a_2L^{2/3},\sqrt{a_2}\,\ell\Big)+C\eta^{-1}\ell^6\,.$$
Now, by the assumption $ Q_\ell\subset\{x_2\geq \ell^{-2}\}$, we
know that $\sqrt{a_2}\,\ell\geq 1$. Thus we may use \eqref{eq:g'} to
write,
\begin{align*}
\mathcal E_{L,R}(u)&\leq
\frac{L^{-2/3}}{a_2}\left(g\big((1+\eta)a_2L^{2/3}\big)+\frac{C\sqrt{1+\eta}\,L^{1/3}}{\ell}\right)(\sqrt{a_2}\ell)^2
+C\eta^{-1}\ell^6\\
&=L^{-2/3}\left(g\big((1+\eta)a_2L^{2/3}\big)+\frac{C\sqrt{1+\eta}\,L^{1/3}}{\ell}\right)\ell^2+C\eta^{-1}\ell^6\,,\end{align*}
which is uniformly true for  $ a \in \overline{Q_\ell}\,$.\\
We now select
$a=\big(c_1,c_{2} -\frac \ell 2\big)\,$.
Since $g(\cdot)$ is a non-decreasing function, then
$$
g\big((1+\eta)a_2L^{2/3}\big)=\inf_{x_2 \in (-\frac \ell 2+c_2, c_2 +\frac \ell 2)} g\big((1+\eta)x_2L^{2/3}\big)\,.$$
%{\clr En fait $a_{2} = c_{2} -\frac \ell 2$ !}
This yields,
$$
\ell^2 \, g\big((1+\eta)a_2L^{2/3}\big )\leq
\int_{Q_\ell}g\big((1+\eta)x_2L^{2/3}\big)\,dx_1dx_2\,,
$$
and finishes the proof of Lemma~\ref{lem:ub}.
\end{proof}

\begin{proof}[Proof of Theorem~\ref{thm:ub}]~\\
Let $R=4$, $L\in(0,1)$, $\eta=L$ and $\ell=\frac14$.
Let $(Q_{\ell,j})_{j}$ be the lattice of squares generated by the
square
$$Q=(-R/2,-R/2+\ell)\times(\ell^{-2}, \ell^{-2}+\ell)\,.$$
Define the set of indices
$$\mathcal J=\big\{j~:~Q_{\ell,j}\subset\mathcal S_R\cap\{ x_2\geq\ell^{-2}\}\quad{\rm and}~Q_{\ell,j}\cap\{x_2\leq  (1+\eta)^{-1}L^{-2/3}\}\not=\emptyset\big\}\,.$$
%{\clr Bernard: Il me semble qu'il y a le probl\`eme que $\ell^{-\frac 12}$ devrait sans doute \^etre $\ell^{-2}$.\\}
For all $x=(x_1,x_2)\in\R^2$ with $x_2\geq0$, define  $u(x)$ as follows,
$$u(x)=\left\{
\begin{array}{ll}
u_{\ell,j}(x)&{\rm if~}j\in\mathcal J\,,\\
0&{\rm if~}j\not\in\mathcal J\,,
\end{array}
\right.
$$
where $u_{\ell,j}\in H^1_0(Q_{\ell,j})$ is a minimizer of the
following ground state energy
$$\inf\{\mathcal E_{L,R}(w)~:~w\in H^1_0(Q_{\ell,j})\}\,.$$
We extend $u(x)$ in $\{x_2\leq 0\}$ as follows,
$$ u(x)= \bar u(x_1,-x_2)\,,\quad x=(x_1,x_2)\quad {\rm and ~}x_2\leq 0\,.$$
Clearly, $u\in H^1_{\rm mag,0}(\mathcal S_R)$. Notice that,
$$\mathcal E_{L,R}(u)=2\sum_{j\in\mathcal J}\mathcal
E_{L,R}(u_{\ell,j})\,,$$ and for $j\in\mathcal J$, the square
$Q_{\ell,j}$ satisfies the assumption in Lemma~\ref{lem:ub}. We use
Lemma~\ref{lem:ub} to write, \begin{equation}\label{eq:ub} \mathcal
E_{L,R}(u)\leq 2L^{-2/3}\int_{\mathcal
D_\ell}g\Big((1+\eta)L^{2/3}x_2\Big)\,dx_1dx_2+CL^{-1/3}|\mathcal
D_\ell|\,,\end{equation} where the domain $\mathcal D_\ell$ is given
as follows,
$$\mathcal D_\ell=\bigcup_{j\in\mathcal J}\overline{Q_{\ell,j}}\,.$$
Thanks to the definition of the set $\mathcal J$, it is clear that,
$$ \mathcal S_R\cap\{\ell^{-2}\leq x_2\leq
(1+\eta)^{-1}L^{-2/3}\}\subset\mathcal D_\ell  \subset   \mathcal
S_R\cap\{0\leq x_2\leq (1+\eta)^{-1}L^{-2/3}+\ell\}\,.$$
This yields:
$$
|\mathcal D_\ell|=\mathcal O(RL^{-1/3})\,,$$ and (since the function
$g(\cdot)$ is valued in $ [-\frac 12,0]$ and $g(b)=0$ for all $b\geq
1$),
\begin{align*}
\int_{\mathcal
D_\ell}g\Big((1+\eta)L^{2/3}x_2\Big)\,dx_1dx_2&  \leq
\int_{\ell^{-2}}^{(1+\eta)^{-1}L^{-2/3}} \int_{-R/2}^{R/2}g\Big((1+\eta)L^{2/3}x_2\Big)\,dx_1dx_2\\
& =(1+\eta)^{-1}L^{-2/3}R\int_{\ell^{-2}(1+\eta)L^{2/3}}^1g(t)\,dt\\
&\leq (1+\eta)^{-1}L^{-2/3}R\int_{0}^1g(t)\,dt  +\ell^{-2}R\,.
\end{align*}
Substitution into \eqref{eq:ub} yields (recall that $\eta=L\in(0,1)$
and $\ell=\frac14$),
$$\mathcal E_{L,R}(u)\leq 2L^{-4/3}R\int_0^1g(t)\,dt+CRL^{-1/3}\,.$$
Since $u\in H^1_{\rm mag,0}(\mathcal S_R)$, then
$$\er\leq\mathcal E_{L,R}(u)\leq
2L^{-4/3}R\int_0^1g(t)\,dt+CRL^{-1/3}\,.$$ We divide by $R$ and use \eqref{eq:lb-er} to deduce that
$$E(L)\leq 2L^{-4/3}R\int_0^1g(t)\,dt+CL^{-1/3}\,.$$
\end{proof}

\section{Proof of Theorem~\ref{prop:HK+}}

%\begin{proof}
Let $\ell\in(0,1)$ be a parameter {\bf independent} of $\kappa$.
Define the two sets,
$$\Omega_{\kappa,\ell}=\{\,x\in\Omega~:~|B_0(x)|<\frac1{b(\kappa)\kappa}\quad{\rm and}~{\rm dist}(x,\partial\Omega)>\ell\,\}\,,\quad
\Gamma_{\kappa,\ell}=\{x\in\Gamma~:~{\rm
dist}(x,\partial\Omega)>\ell\}\,.$$ Recall that $\Gamma=\{B_0=0\}$
and by Assumption~\ref{ass-B0}, $\Gamma\cap\partial\Omega$ is a
finite set. Thus, the area of $\Omega_{\kappa,\ell}$ and the length
of $\Gamma_{\kappa,\ell}$ satisfy, for $\kappa$ sufficiently large
and some constant $C>0$ (independent of $\kappa$ and $\ell$),
\begin{equation}\label{eq:appendix}
|\Omega_{\kappa,\ell}|\leq
\frac{C\varepsilon(\ell)}{b(\kappa)\kappa}\,,\quad
|\Gamma_{\kappa,\ell}|\leq C\varepsilon(\ell)\,,
\end{equation}
where $\varepsilon(\cdot)$ is a function independent of $\kappa$ and
satisfying
$\lim_{\ell\to0_+}\varepsilon(\ell)=0\,.$\\
The  standard proof of \eqref{eq:appendix} is left to the
reader. The estimate in \eqref{eq:appendix} is easier to
verify  under the additional assumption that $\Gamma$ and $\partial
\Omega$ intersect transversally, and in this case $\varepsilon(\ell)
= \ell$.
%{\clr Bernard: Il ne me semble pas utile
%de mettre l'appendice..\\}
Note that $g(\cdot)$ vanishes in
$[1,\infty)$. Thus,
\begin{equation}\label{g=E} \int_\Omega g\big(b (\kappa) \, \kappa\,
|B_0(x)|\big)\,dx =\int_{\Omega_{\kappa,\ell}} g\big(b (\kappa) \,
\kappa\, |B_0(x)|\big)\,dx+\mathcal
O\left(\frac{\varepsilon(\ell)}{b(\kappa)\kappa}\right)\,.\end{equation}
%{\clr
%Bernard: C'est clair avec hypoth\`ese de transversalit\'e de
%$\Gamma$ et $\partial \Omega$. Est-ce \'evident sans cette
%hypoth\`ese ?\\}
Since $b(\kappa)\kappa\to +\infty$, then Assumption~\ref{ass-B0}
yields, for $\kappa$ sufficiently large,
\begin{equation}\label{eq:nablaB0}
\exists~C>0\,,\quad \forall~x\in\Omega_\kappa\,,\quad\Big|\,|\nabla B_0(x)|^{-1}-|\nabla
B_0(p( x))|^{-1}\,\Big|\leq \frac{C}{b(\kappa)\kappa}\,.
\end{equation}
Here, for $\kappa$ sufficiently large and for all
$x\in\Omega_{\kappa,\ell}$, the point $p(x)\in\Gamma$ is uniquely
defined by the relation
$${\rm dist}(x,\Gamma)={\rm dist}(x,p(x))\,.$$
The co-area formula yields,
$$\int_{\Omega_{\kappa,\ell}} g\big(b (\kappa) \, \kappa\, |B_0(x)|\big)\,dx=
\int_0^{\frac1{b(\kappa)\kappa}}\left(\int_{\{|B_0|=r\}\cap\Omega_{\kappa,\ell}}|\nabla
B_0(x)|^{-1}\,g\big(b(\kappa)\kappa \,r\big)\,ds\right)\,dr\,.$$
Thanks to \eqref{eq:nablaB0}, we get further,
\begin{multline*}
\int_{\Omega_{\kappa,\ell}} g\big(b (\kappa) \, \kappa\,
|B_0(x)|\big)\,dx\\=
\int_0^{\frac1{b(\kappa)\kappa}}\left(\int_{\{|B_0|=r\}\cap\Omega_{\kappa,\ell}}|\nabla
B_0(p(x))|^{-1}\,g\big(b(\kappa)\kappa
\,r\big)\,ds\right)\,dr+\mathcal
O\left(\frac1{\big(b(\kappa)\kappa\big)^2}\right)\,.\end{multline*}
Now, a simple calculation yields,
\begin{multline*}
\int_0^{\frac1{b(\kappa)\kappa}}\left(\int_{\{|B_0|=r\}\cap\Omega_{\kappa,\ell}}|\nabla
B_0(p(x))|^{-1}\,g\big(b(\kappa)\kappa
\,r\big)\,ds\right)\,dr\\
=\int_0^{\frac1{b(\kappa)\kappa}}\left(\int_{\{|B_0|=r\}\cap\Omega_{\kappa,\ell}}|\nabla
B_0(p(x))|^{-1}\,ds\right)g\big(b(\kappa)\kappa
\,r\big)\,dr\,,\end{multline*} and (using a simple analysis of the
arc-length measure in the curve $\{|B_0|=r\}$ and the assumption
that $\Gamma\cap\partial\Omega$ is a finite set),
\begin{multline*}
\forall~r\in\left(0,\frac1{b(\kappa)\kappa}\right)\,,\quad
\int_{\{|B_0|=r\}\cap\Omega_{\kappa,\ell}}|\nabla
B_0(p(x))|^{-1}\,ds\\
=\int_{\{|B_0|=0\}}|\nabla B_0(p(x))|^{-1}\,ds+\mathcal
O\big(\eta(\kappa)+\varepsilon(\ell)\big)\,,\quad(\kappa\to
\infty)\,,
\end{multline*}
where $\eta(\cdot)$ satisfies
$$\lim_{\kappa\to\infty}\eta(\kappa)=0\,.$$
%{\clr Bernard:  Il faut-\^etre coh\'erent. On ne peut pas m\'elanger des $o$, des $O$ et des $\epsilon$.  Je pr\'ef\`ererais $\eta(\frac 1 \kappa) + \epsilon(\ell)$ ou bien $\epsilon ( \frac 1 \kappa,\ell)$, le $\epsilon$ changeant de ligne en ligne.\\}
As a consequence, we get the following formula,
$$
\int_{\Omega_{\kappa,\ell}} g\big(b (\kappa) \, \kappa\, |B_0(x)|\big)\,dx=
\int_{\Gamma}\left(\int_0^{\frac1{b(\kappa)\kappa}}\,g\big(b(\kappa)\kappa
\,r\big)\,dr\right)|\nabla
B_0(x)|^{-1}ds(x)+\mathcal O\left(\frac{\eta(\kappa)+\varepsilon(\ell)}{b(\kappa)\kappa}\right)
$$
A change of variable and Theorem~\ref{thm:HK} yield,
\begin{align*}
\int_0^{\frac1{b(\kappa)\kappa}}\,g\big(b(\kappa)\kappa
\,r\big)\,dr&=\frac1{b(\kappa)\kappa}\int_0^1g(t)\,dt\\
&=\frac1{2b(\kappa)\kappa}\left(L^{4/3}E(L)+\varepsilon_1(L)\right)\,,
\end{align*}
where $\lim_{L\rightarrow 0} \varepsilon_1(L) =0\,.$ \\
For $\kappa$ sufficiently large, we take
$$L=b(\kappa)|\nabla B_0(x)|\,,$$
and get,
$$
\int_{\Omega_{\kappa,\ell}} g\big(b (\kappa) \, \kappa\, |B_0(x)|\big)\,dx=
\frac1{2\kappa}\int_{\Gamma}|\nabla
B_0(x)|^{1/3}E\Big(b(\kappa)|\nabla B_0(x)|\Big)ds(x)+\mathcal O\left(\frac{ \lambda(\kappa)+\eta(\kappa)+\varepsilon(\ell)}{b(\kappa)\kappa}\right)\,,
%+{\clr \lambda(\kappa)\,,}
$$
where $\lambda(\cdot)$ satisfies
$\displaystyle\lim_{\kappa\to\infty}\lambda(\kappa)=0$.
 %{\clr Bernard: Je ne comprend pas cette ligne. Veux tu dire $\mathcal O\left(\frac{\eta(\kappa)/+ \lambda(\kappa) +\varepsilon(\ell)}{b(\kappa)\kappa}\right) $}
 Inserting
this into \eqref{g=E} and noticing that $\eta(\kappa)\to0$ as
$\kappa\to\infty$ and $\ell$ was arbitrary in $(0,1)$, then we get
the conclusion in Theorem~\ref{prop:HK+}.
%\end{proof}
\\~\\
{\bf Acknowledgements}\\
%
%{\clr Pour Bernard:\\
This work was done when the first author was Simons foundation
Visiting Fellow at the Isaac Newton Institute in Cambridge. The
support of the ANR project Nosevol is also acknowledged. The second
author acknowledges  financial support through a fund from Lebanese
University.

\end{document}